\renewcommand\th@plain{\slshape}
\newtheoremstyle{plain}
 {2mm}
 {2mm}
 {\slshape}
 {}
 {\bfseries}
 {.}
 {.5em}
 {}
\theoremstyle{plain}
\newtheorem{theorem}{Theorem}[section]
\newtheorem{corollary}[theorem]{Corollary}
\newtheorem{lemma}[theorem]{Lemma}
\newtheorem{proposition}[theorem]{Proposition}
\newtheorem{claim}[theorem]{Claim}
\newtheorem*{claim*}{Claim}
\newtheorem{conjecture}[theorem]{Conjecture}
\newtheorem{question}[theorem]{Question}
\newtheoremstyle{definition}
 {2mm}
 {2mm}
 {\normalfont}
 {}
 {\bfseries}
 {.}
 {.5em}
 {}
\theoremstyle{definition}
\newtheorem{definition}[theorem]{Definition}
\newtheorem{remark}[theorem]{Remark}
\newtheorem*{acknowledgements}{Acknowledgements}
\crefname{section}{Section}{Sections}
\crefname{theorem}{Theorem}{Theorems}
\crefname{corollary}{Corollary}{Corollaries}
\crefname{lemma}{Lemma}{Lemmas}
\crefname{lemma}{Lemma}{Lemmas}
\crefname{proposition}{Proposition}{Propositions}
\crefname{claim}{Claim}{Claims}
\crefname{definition}{Definition}{Definitions}
\crefname{notation}{Notation}{Notations}
\crefname{problem}{Problem}{Problems}
\crefname{note}{Note}{Notes}
\crefname{remark}{Remark}{Remarks}
\crefname{example}{Example}{Examples}
\crefname{conjecture}{Conjecture}{Conjectures}
\crefname{question}{Question}{Questions}
\crefname{mainthm}{Theorem}{Theorems}
\crefname{mainprop}{Proposition}{Propositions}
\crefname{enumi}{}{}
\crefname{enumii}{}{}
\crefname{enumiii}{}{}
\numberwithin{equation}{section}
\def\C{{\mathbb C}}
\def\Q{{\mathbb Q}}
\def\R{{\mathbb R}}
\def\Z{{\mathbb Z}}
\def\P{{\mathbb P}}
\def\A{{\mathbb A}}
\def\QQ{\overline{\mathbb Q}}
\def\O{{ \mathcal{O}}}
\DeclareMathOperator{\pr}{pr}
\DeclareMathOperator{\Pic}{Pic}
\DeclareMathOperator{\Spec}{Spec}
\DeclareMathOperator{\codim}{codim}
\newcommand{\e}{\varepsilon}
\renewcommand{\l}{\lambda}
\newcommand{\z}{\zeta}
\newcommand{\G}{\Gamma}
\begin{document}

	\title[Arithmetic degree and Zariski dense orbit conjecture]
	{Arithmetic degree and its application to Zariski dense orbit conjecture} 
	
	\author[Yohsuke Matsuzawa]{Yohsuke Matsuzawa}
	\author[Junyi Xie]{Junyi Xie}
	\address{Department of Mathematics, Graduate School of Science, Osaka Metropolitan University, 3-3-138, Sugimoto, Sumiyoshi, Osaka, 558-8585, Japan}
	\email{matsuzaway@omu.ac.jp}
	\address{Beijing International Center for Mathematical Research, Peking University, Beijing 100871, China}
	\email{xiejunyi@bicmr.pku.edu.cn}

	\begin{abstract} 
		We prove that for a dominant rational self-map $f$ on a quasi-projective variety defined over $\QQ$,
		there is a point whose $f$-orbit is well-defined and its arithmetic degree is arbitrarily close to the first dynamical degree of $f$.
		As an application, we prove that Zariski dense orbit conjecture holds for a birational map defined over $\QQ$ 
		whose first dynamical degree is strictly larger than its third dynamical degree.
		In particular, the conjecture holds for birational maps on threefolds whose first dynamical is degree greater than $1$.
	\end{abstract}

	\subjclass[2020]{Primary 37P15; 
		Secondary 
		37P55 
	}	
	
	\keywords{Arithmetic dynamics, Arithmetic degree, Zariski dense orbit conjecture}

	\maketitle
	\tableofcontents

\section{Introduction}

For a dominant rational map $f \colon X \dashrightarrow X$ on a projective variety defined over $\QQ$,
Kawaguchi-Silverman conjecture predicts that height growth rate along a Zariski dense orbit is equal to
the first dynamical degree of $f$. 
More precisely, 
let $L$ be an ample divisor on $X$ and let $h_{L}$ be a Weil height function associated with $L$
(we refer to \cite{HS00,La83,BG06} for the basics of height functions).
For a point $x \in X(\QQ)$, we say the $f$-orbit is well-defined if 
\begin{align}
f^{n}(x) \notin I_{f}, \quad \forall n \geq 0
\end{align}
where $I_{f}$ is the indeterminacy locus of $f$.
The set of such points is denoted by $X_{f}(\QQ)$:
\begin{align}\label{Xfdef}
X_{f}(\QQ) = \{ x \in X(\QQ) \mid f^{n}(x) \notin I_{f}, \ \forall n \geq 0\}.
\end{align}
For $x \in X_{f}(\QQ)$, the following invariant
\begin{align}
\alpha_{f}(x) := \lim_{n \to \infty} \max\{1, h_{L}(f^{n}(x))\}^{ \frac{1}{n}}
\end{align}
is called the arithmetic degree of $f$ at $x$, provided the limit exists.
By the basic properties of height functions, it is easy to see that 
the limit is independent of the choice of $L$ and $h_{L}$.
The existence of the limit is proven for surjective self-morphisms on projective varieties \cite[Theorem 3]{KS16a}
(it is stated for normal projective varieties, but the general case easily follows from normal case by taking normalization),
and for arbitrary dominant rational self-maps and points with generic orbit \cite[Theorem 1.3]{matsuzawa-note-ad-dls}.
Here an orbit is called generic if it converges to the generic point with respect to the Zariski topology.
More generally, the convergence of the arithmetic degree is proven for orbits satisfying dynamical Mordell-Lang (DML for short) conjecture.

For $i = 0,\dots, \dim X$, the $i$-th dynamical degree of $f$ is defined by
\begin{align}
\l_{i}(f) = \lim_{n \to \infty} \deg_{i,L}(f^{n})^{ \frac{1}{n}}
\end{align}  
where the $i$-th degree $\deg_{i,L}(f^{n})$ is defined as follows.
Let $\G_{f^{n}} \subset X \times X$ be the graph of $f^{n}$ and let
$p_{i} : \G_{f^{n}} \longrightarrow X$ be the projections ($i=1,2$):
\begin{equation}
\begin{tikzcd}
\G_{f^{n}} \arrow[d,"p_{1}",swap] \arrow[rd,"p_{2}"]& \\
X  \arrow[r,dashed,"f^{n}",swap] & X
\end{tikzcd}
\end{equation}
Then we define
\begin{align}
\deg_{i,L}(f^{n}) = \big( p_{2}^{*}L^{i} \cdot p_{1}^{*}L^{\dim X - i}  \big).
\end{align}
It is known that the limits exist and are independent of the choice of $L$ (see \cite{DS05,Da20,Tr20}). 
Note that by the definition and the existence of the limits, we easily see $\l_{i}(f^{m}) = \l_{i}(f)^{m}$
for all $m \in \Z_{\geq 0}$.
Another important but non-trivial fact on dynamical degrees is the log concavity.
For every $i \in \{1,\dots, \dim X -1 \}$, the following holds:
\begin{align}
\l_{i-1}(f) \l_{i+1}(f) \leq \l_{i}(f)^{2}.
\end{align}
See \cite{Tr20}.

Now let us state Kawaguchi-Silverman conjecture.
\begin{conjecture}[Kawaguchi-Silverman conjecture {\cite{Sil12,KS16b}}]\ 

\noindent
Let $f \colon X \dashrightarrow X$ be a dominant rational map on a projective variety defined over $\QQ$.
Let $x \in X_{f}(\QQ)$.
Then $ \alpha_{f}(x)$ exists (i.e.\ the limit exists), 
and if the orbit $O_{f}(x) = \{ x, f(x), f^{2}(x), \dots\}$
is Zariski dense in $X$, then $ \alpha_{f}(x) = \l_{1}(f)$.
\end{conjecture}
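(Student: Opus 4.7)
The plan is to split KSC into three logically independent parts: (i) existence of the limit defining $\alpha_f(x)$, (ii) the upper bound $\alpha_f(x) \leq \lambda_1(f)$, and (iii) the reverse inequality $\alpha_f(x) \geq \lambda_1(f)$ under the Zariski density hypothesis.

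Part (i) is essentially free: Zariski density of $O_f(x)$ forces the orbit to be generic in the sense defined just before the statement, so \cite[Theorem 1.3]{matsuzawa-note-ad-dls} already quoted in the excerpt yields existence of the limit immediately.

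Part (ii) should follow from a standard height comparison. The idea is to pass to a smooth projective model $\pi \colon \tilde X \to X$ resolving the indeterminacies of $f$, and to construct, for each $\varepsilon > 0$, a nef $\R$-divisor class $\xi_\varepsilon \in \NS(\tilde X)_{\R}$ (arising from a Perron-type eigenvector of $f^{*}$ on nef classes, with an $\varepsilon$-perturbation) satisfying an approximate pullback inequality $f^{*}\xi_\varepsilon \preceq (\lambda_1(f) + \varepsilon)\, \xi_\varepsilon$. Functoriality of heights and the triangle inequality then give $h_{\xi_\varepsilon}(f(y)) \leq (\lambda_1(f) + \varepsilon)\, h_{\xi_\varepsilon}(y) + O(1)$ on $X_{f}(\QQ)$; iterating along the orbit of $x$ and letting $\varepsilon \to 0$ produces the upper bound, after the standard precaution of taking the maximum with $1$ to handle points of small height.

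Part (iii) is the main obstacle, and is the reason KSC remains open in general. The most natural strategy, and the one that presumably underlies the paper's partial results, is to combine the upper bound with the existence theorem announced in the abstract: there always exists \emph{some} well-defined orbit whose arithmetic degree is arbitrarily close to $\lambda_1(f)$. But this only produces \emph{some} point of near-maximal arithmetic degree, not the given point $x$, so upgrading to (iii) would require a rigidity result of the form ``the set $\{y \in X_{f}(\QQ) : \alpha_f(y) \leq \lambda_1(f) - \delta\}$ is contained in a proper $f$-invariant Zariski closed subset of $X$'', which would be incompatible with Zariski density of $O_f(x)$. No such structural statement is currently known in full generality; the paper's application to the Zariski dense orbit conjecture in the regime $\lambda_1(f) > \lambda_3(f)$ should be read as a substitute that sidesteps this missing rigidity by exploiting the strict inequality between dynamical degrees.
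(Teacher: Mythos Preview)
The statement you are attempting to prove is labelled a \emph{Conjecture} in the paper, and the paper does not claim to prove it; it is quoted as an open problem (the Kawaguchi--Silverman conjecture), with partial results and references given immediately after. So there is no proof in the paper to compare your proposal against. Your own write-up is honest about this: you explicitly say in Part~(iii) that no rigidity statement of the required shape is known, and hence your proposal is not a proof but an outline of the known pieces together with an identification of the missing one. Read that way, it is accurate and well-aligned with how the paper frames the situation.

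That said, two points in your outline deserve correction even as a sketch. First, in Part~(i) you claim Zariski density of $O_f(x)$ forces the orbit to be generic; this is not automatic. ``Generic'' in the sense used here means that for every proper closed $W\subset X$ only finitely many iterates lie in $W$, which is strictly stronger than Zariski density of the orbit. The cited existence result \cite[Theorem 1.3]{matsuzawa-note-ad-dls} applies to generic orbits (or more generally orbits with the DML property), so your reduction needs an extra hypothesis or argument. Moreover, the conjecture asserts existence of $\alpha_f(x)$ for \emph{every} $x\in X_f(\QQ)$, not only those with dense orbit, and that unconditional existence is also open for general dominant rational maps. Second, your Part~(ii) sketch via a nef Perron eigenvector is the right intuition for morphisms, but for rational maps the actual arguments in \cite{Ma20a,JSXZ21} are more delicate (pull-back on $\NS$ is not functorial under composition); the upper bound $\overline{\alpha}_f(x)\le\lambda_1(f)$ is nonetheless known, as the paper records.
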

We refer to \cite{matsuzawa2023recent} for an introduction and recent advances on this conjecture.
It is known that for any $x \in X_{f}(\QQ)$, the limsup version of the arithmetic degree is bounded above by
the first dynamical degree \cite[Theorem 1.4]{Ma20a} \cite[Proposition 3.11]{JSXZ21}:
\begin{align}
\overline{\alpha}_{f}(x) := \limsup_{n \to \infty} \max\{1, h_{L}(f^{n}(x))\}^{ \frac{1}{n}} \leq \l_{1}(f).
\end{align}
Thus the conjecture asserts that the arithmetic degree would take its maximal value at points with dense orbit.
Although  there is no logical implications, it is natural to ask that if there is always a point $x \in X_{f}(\QQ)$
such that $ \alpha_{f}(x) = \l_{1}(f)$.
The answer is yes for surjective morphisms on projective varieties \cite[Theorem 1.6]{MSS18a} (it is stated only for smooth projective varieties,
but the proof works for any projective varieties; just find a point at which the nef canonical height does not vanish),
and also for some classes of rational maps \cite[Theorem 3]{KS14}. See \cite{MW22,SS21,SS23} for related works.
In this paper, we prove the following.

\begin{theorem}\label{mainthm:adclosetodyndeg}
Let $X$ be a projective variety over $\QQ$.
Let $f \colon X \dashrightarrow X$ be a dominant rational map defined over $\QQ$.
Then for any $\e > 0$, the set
\begin{align}\label{mainthm:adclosetodyndeg:set}
\left\{ x \in X_{f}(\QQ) \ \middle|\ \text{$ \alpha_{f}(x)$ exists and $\alpha_{f}(x) \geq \l_{1}(f) - \e$} \right\}
\end{align}
is Zariski dense in $X$.
\end{theorem}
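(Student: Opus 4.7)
The plan is to reduce to a high iterate of $f$, for which the first degree closely approximates $\lambda_{1}(f)$, and then produce a Zariski dense set of orbits realizing the expected height growth via a Hilbert-irreducibility / specialization argument.

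First, I would dispose of the case $\lambda_{1}(f) \leq 1$: the desired inequality $\alpha_{f}(x) \geq \lambda_{1}(f) - \varepsilon$ is then automatic for every $x \in X_{f}(\QQ)$ since $\alpha_{f}(x) \geq 1$, and Zariski density of $X_{f}(\QQ)$ follows because its complement $\bigcup_{n \geq 0} (f^{n})^{-1}(I_{f})$ is a countable union of proper closed subvarieties (after resolving indeterminacies) and therefore cannot exhaust $X(\QQ)$. Assume then that $\lambda_{1}(f) > 1$. The key reduction is to replace $f$ by $g := f^{N}$ for $N$ large. Since $\alpha_{g}(x) = \alpha_{f}(x)^{N}$ whenever defined, and $\lambda_{1}(g) = \lambda_{1}(f)^{N}$, the target inequality becomes $\alpha_{g}(x) \geq (\lambda_{1}(f) - \varepsilon)^{N}$. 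By the definition $\lambda_{1}(f) = \lim_{n} \deg_{1,L}(f^{n})^{1/n}$, we may arrange $\deg_{1,L}(g) \geq (\lambda_{1}(f) - \varepsilon/2)^{N}$, which gives substantial slack: we only need to find $x$ with $\alpha_{g}(x) \geq \theta^{N} \deg_{1,L}(g)$ for some fixed $0 < \theta < 1$ depending only on $\varepsilon$ and $\lambda_{1}(f)$.

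The central task is then, for this $g$, to produce a Zariski dense set of $x \in X_{g}(\QQ)$ whose orbit heights grow at approximately the rate $\deg_{1,L}(g)^{n}$. I would approach this via a spreading-out construction: interpret $\deg_{1,L}(g^{n}) = (p_{1}^{*} L^{\dim X - 1} \cdot p_{2}^{*} L)$ on the graph $\Gamma_{g^{n}}$ as an identity of arithmetic intersection numbers after spreading to $\Spec \mathcal{O}_{K}$, and combine this with Hilbert irreducibility on a suitable tower of covers of $X$ to exhibit $x$ for which the fiberwise contribution matches the expected $\deg_{1,L}(g^{n}) \cdot h_{L}(x) - O(1)$ at every $n$. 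Any such Hilbert-generic $x$ has Zariski dense $g$-orbit, so Theorem 1.3 of \cite{matsuzawa-note-ad-dls} guarantees that the limit defining $\alpha_{g}(x)$ exists, and we then read off the desired lower bound from the height growth estimate.

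The main obstacle, where I expect almost all of the technical weight to lie, is precisely this pointwise lower bound $h_{L}(g^{n}(x)) \geq \deg_{1,L}(g^{n}) \cdot h_{L}(x) - O(1)$. The matching upper bound is elementary and essentially gives $\overline{\alpha}_{g}(x) \leq \lambda_{1}(g)$, but the lower bound can fail whenever the orbit approaches $I_{g}$ or lands on divisors contracted by some iterate. Ruling this out on a Zariski dense set requires a quantitative ``generic-position'' statement: for $x$ outside a thin set, the orbit of $x$ under all iterates of $g$ stays away from the exceptional and indeterminacy loci at the expected rate, and no systematic cancellation of height occurs. I envision proving this by a combination of Hilbert irreducibility (to force rich Galois orbits of the points $g^{n}(x)$, preventing them from clustering on proper subvarieties) with an arithmetic-Bezout / Northcott pigeonhole that extracts a pointwise lower bound on $h_{L}(g^{n}(x))$ from the global intersection identity for $\deg_{1,L}(g^{n})$.
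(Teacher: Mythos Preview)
Your proposal has a genuine gap at the point you yourself flag as carrying ``almost all of the technical weight.'' The target inequality
\[
h_{L}(g^{n}(x)) \geq \deg_{1,L}(g^{n})\cdot h_{L}(x) - O(1)
\]
for all $n$ is not something one can hope to extract from Hilbert irreducibility or arithmetic B\'ezout. On a variety of dimension $\geq 2$, the class $(g^{n})^{*}L$ is not numerically proportional to $L$, so $h_{(g^{n})^{*}L}$ and $\deg_{1,L}(g^{n})\cdot h_{L}$ bear no pointwise comparison even for completely generic $x$; the single intersection number $\deg_{1,L}(g^{n})=((g^{n})^{*}L\cdot L^{d-1})$ does not control the height function of $(g^{n})^{*}L$. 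The only setting in which such a comparison holds is on a \emph{curve}, where two divisors are automatically proportional up to $O(\sqrt{h})$. Your Hilbert-tower idea also cannot, as stated, keep the orbit off the indeterminacy locus for all $n$: over $\QQ$ a countable union of proper closed subsets can easily cover $X(\QQ)$ (already your justification that $X_{f}(\QQ)$ is Zariski dense in the $\lambda_{1}\leq 1$ case is incorrect for this reason).

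The paper's argument supplies exactly the two ingredients missing from your outline. First, instead of a one-step lower bound it invokes a \emph{recursive} inequality from \cite{xie2024algebraic}: for suitable $m$ there exist $0\leq\mu<\lambda_{1}$ and an open $V$ on which, up to constants,
\[
h_{L}\circ f^{2m}-\mu^{m}h_{L}\circ f^{m}\ \geq\ (\zeta\lambda_{1})^{m}\bigl(h_{L}\circ f^{m}-\mu^{m}h_{L}\bigr),
\]
so that one only needs a \emph{single} initial point with $h_{L}(f^{m}(x))-\mu^{m}h_{L}(x)>0$ and orbit in $V$. Second, to produce that initial point, the paper restricts to a curve $C$ through a prescribed point, chosen so that $\deg(g^{l}|_{C})^{*}L$ is close to $\lambda_{1}^{ml}$; on $C$ the desired height comparison \emph{does} hold, and a separate result (Proposition~2.2: ample heights are unbounded on any non-empty adelic open subset) lets one pick $x\in C$ with $h_{L}(x)$ large enough. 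Well-definedness of the orbit and DML along it come from the adelic topology of \cite{Xi22}, not from Hilbert irreducibility.
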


\begin{remark}\label{rmk:densewrtadtop}
The set \cref{mainthm:adclosetodyndeg:set} is actually dense in $X(\QQ)$
with respect to the adelic topology (in the sense of \cite{Xi22}). See \cref{thm:advsdd}.
\end{remark}

\begin{remark}
We prove the same statement for quasi-projective varieties (\cref{thm:advsdd}).
The arithmetic degree and the dynamical degrees are defined as follows.
Take a projective closure $\iota \colon X \hookrightarrow X'$, i.e. open immersion into a projective variety $X'$ over $\QQ$.
Then a dominant rational map $f \colon X \dashrightarrow X$ can be regraded as that of on $X'$, denoted by $f'$.
Then $X_{f}(\QQ) \subset X'_{f'}(\QQ)$, and we define $ \alpha_{f}(x) := \alpha_{f'}(x)$ for $x \in X_{f}(\QQ)$
(see \cite[Definition 2.3]{matsuzawa2023recent}).
The well-definedness, i.e.\ independence of the embedding follows from \cite[Lemma 3.8]{JSXZ21}, the same trick as in \cref{rem:htcomparisonqproj}.
The dynamical degrees are defined in the same way: $ \l_{i}(f) := \l_{i}(f')$.
By the birational invariance of dynamical degrees (see \cite{DS05,Da20,Tr20}), 
this definition is also independent of the embedding $\iota$.
\end{remark}

In the proof of \cite[Theorem 8.4]{JSXZ21}, they find an application of the arithmetic degree to the following Zariski dense orbit conjecture.

\begin{conjecture}[Zariski dense orbit conjecture {\cite[Conjecture 7.14]{MedScan14}, cf.\ \cite[Conjecture 4.1.6]{Z06} as well}]\label{conj_zdo} 
Let $X$ be a projective variety over an algebraically closed field $k$ of characteristic zero, 
and let $f : X \dashrightarrow X$ be a dominant rational self-map.
If every $f$-invariant rational function on $X$ is constant, then there exists $x \in X_f(k)$ whose orbit $O_f(x)$ is Zariski dense in $X$.
\end{conjecture}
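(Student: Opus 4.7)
My plan is to combine \cref{mainthm:adclosetodyndeg} with a dichotomy between orbits of maximal arithmetic growth and orbits trapped inside proper invariant subvarieties. The guiding observation is that if $O_f(x)$ fails to be Zariski dense then $Y := \overline{O_f(x)}$ is a proper $f$-invariant subvariety, and $\alpha_f(x)$ is really the arithmetic degree of the restricted map $f|_Y$ at $x$, hence bounded above by $\l_1(f|_Y)$. Thus, if one could establish, under the hypothesis that every $f$-invariant rational function is constant, that $\l_1(f|_Y) < \l_1(f)$ for every proper $f$-invariant $Y \subsetneq X$, then a point drawn from the Zariski dense set
\[
D_\e := \{ x \in X_f(\QQ) \mid \alpha_f(x) \geq \l_1(f) - \e\}
\]
produced by \cref{mainthm:adclosetodyndeg}, with $\e$ small enough, would automatically have Zariski dense orbit.

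To set this up I would first reduce to the case $k = \QQ$ (i.e.\ $\overline{\mathbb Q}$): spreading $X$ and $f$ out over a finitely generated subring of $k$ and specializing at a general closed point of its spectrum shows that a Zariski dense orbit over $\QQ$ lifts to one over $k$. Applying \cref{mainthm:adclosetodyndeg} then gives $D_\e$ Zariski dense for every $\e>0$. The set $X_f(\QQ)$ is partitioned by the countable collection of orbit-closure subvarieties $\{Y_i\}$, and if no orbit were Zariski dense, each $Y_i$ would be a proper $f$-invariant subvariety. Combining the density of $D_\e$ for every $\e$ with the bound $\alpha_f(x) \leq \l_1(f|_{Y_i})$ for $x$ in the class of $Y_i$ yields the a priori relation
\[
\sup \{\l_1(f|_Y) \mid Y \subsetneq X,\ Y\ \textrm{proper}\ f\textrm{-invariant}\} = \l_1(f).
\]

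The hard part is ruling out this equality. In full generality this is essentially the content of the conjecture itself, which remains open. Under the additional assumption that $f$ is birational and $\l_1(f) > \l_3(f)$ --- in particular on a threefold with $\l_1(f) > 1$, where $\l_3(f) = 1$ by birationality --- the Dinh--Nguyen--Truong relative dynamical degree formula can be applied to a family of $Y$'s approaching the supremum: the gap $\l_1(f) > \l_3(f)$ constrains the transverse behaviour enough to force such $Y$'s to organize into a genuine $f$-invariant fibration $X \dashrightarrow B$ of positive relative dimension, whose base then produces a nonconstant $f$-invariant rational function on $X$, contradicting the hypothesis. This reduction from ``an invariant subvariety carrying near-maximal $\l_1$'' to ``an invariant fibration'' is the genuine technical core, and is where the birationality and dimension restrictions enter.
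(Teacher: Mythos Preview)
The statement is a \emph{conjecture}; the paper does not prove it and explicitly notes it remains open over countable fields. So there is no proof in the paper to compare your general argument against, and you yourself acknowledge the full case is out of reach.

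For the special case you sketch at the end (birational $f$ with $\l_1(f) > \l_3(f)$), which the paper does prove as \cref{thm:zdoc-l3<l1}, your outline diverges from the actual argument and has a genuine gap. The paper does not try to organize invariant subvarieties into a fibration. It uses two concrete inputs instead: first, by a theorem of Cantat (or \cite{invhs-bmt}), a birational map with no invariant rational function admits only finitely many totally invariant hypersurfaces, so one chooses $x$ (via \cref{thm:advsdd}) outside their union $H$ with $\alpha_f(x) > \l_3(f)$, forcing any proper orbit closure $Z$ to have $\codim Z \geq 2$; second, for a component $W$ of $Z$ one has the direct bound $\l_1(f^m|_W) \leq \l_{1+\codim W}(f^m)$ from \cite[Lemma~2.3]{MW22}, and with $\codim W \geq 2$ and log-concavity this gives $\l_1(f^m|_W) \leq \l_3(f^m)$, contradicting $\alpha_{f^m}(x) > \l_3(f^m)$. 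Your proposed mechanism---a family of $Y$'s with $\l_1(f|_Y) \to \l_1(f)$ forced by the product formula for relative dynamical degrees into an invariant fibration---is not a known argument: that formula computes dynamical degrees \emph{given} a fibration, it does not manufacture one from a collection of subvarieties. Separately, your reduction from general $k$ to $\QQ$ by spreading out and specializing is not justified as stated: neither the hypothesis (no invariant rational function) nor the conclusion (existence of a dense orbit) is known to transfer cleanly between a special fiber and the generic fiber in the direction you need.
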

Here $X_{f}(k)$ is the set of points with well-defined $f$-orbit, defined in the same way as \cref{Xfdef}.
We refer to \cite{Xi22} for the history of this conjecture and known results.
We remark that the conjecture is proven when the ground field $k$ is uncountable \cite{AC08,BGR17dynRosenlicht}.
The conjecture remains open over countable fields, in particular over $\QQ$.

The idea in  \cite[Theorem 8.4]{JSXZ21} is, roughly speaking, that a point $x \in X_{f}(\QQ)$ with $ \alpha_{f}(x) = \l_{1}(f)$
must have Zariski dense orbit under some conditions on the map $f$.
Using the same idea, in \cite[Theorem C]{MW22}, the conjecture is proven for cohomologically hyperbolic birational self-maps 
on smooth projective threefolds. 
In this paper, we weaken the assumption ``cohomologically hyperbolic" to ``$\l_{1}(f) > 1$".
More generally, we prove the following.

\begin{theorem}\label{mainthm:zdoc-l3<l1}
Let $X$ be a projective variety over $\QQ$.
Let $f \colon X \dashrightarrow X$ be a birational map.
If $\l_{3}(f) < \l_{1}(f)$ (we consider this condition is vacuous when $\dim X \leq 2$), then Zariski dense orbit conjecture holds for $f$.
That is, if $f$ does not admit invariant non-constant rational functions, then
there is a point $x \in X_{f}(\QQ)$ with $O_{f}(x)$ being Zariski dense.
\end{theorem}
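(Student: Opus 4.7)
The plan is to argue by contradiction using \cref{mainthm:adclosetodyndeg}, combined with a restriction-type inequality for dynamical degrees and a finiteness theorem for $f$-periodic hypersurfaces. Assume no orbit in $X_{f}(\QQ)$ is Zariski dense, and choose $\e\in(0,\lambda_{1}(f)-\lambda_{3}(f))$. By \cref{mainthm:adclosetodyndeg}, the set $S_{\e}$ of points $x\in X_{f}(\QQ)$ with $\alpha_{f}(x)\geq \lambda_{1}(f)-\e$ is Zariski dense. For each such $x$, the orbit closure $\overline{O_{f}(x)}$ is proper in $X$; since $f$ permutes its finitely many irreducible components, one of them, say $Y=Y_{x}\subsetneq X$, is $f^{m_{x}}$-invariant for some $m_{x}\geq 1$ and contains $f^{k}(x)$ for some $k\geq 0$. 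Replacing $x$ by $f^{k}(x)$, which preserves $\alpha_{f}$, we may assume $x\in Y$. Since $f^{m_{x}}$ is birational on $X$ and $Y$ is invariant under both $f^{m_{x}}$ and its inverse, $g:=f^{m_{x}}|_{Y}\colon Y\dashrightarrow Y$ is a birational self-map, and the standard properties of the arithmetic degree give
\begin{equation*}
\alpha_{f}(x)^{m_{x}}=\alpha_{g}(x)\leq \lambda_{1}(g).
\end{equation*}

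Now combine the birational duality $\lambda_{1}(g)=\lambda_{d-1}(g^{-1})$, where $d=\dim Y$, with the restriction inequality $\lambda_{k}(h|_{Y})\leq \lambda_{k}(h)$ applied to $h=f^{-m_{x}}$, together with $\lambda_{k}(f^{-m_{x}})=\lambda_{n-k}(f)^{m_{x}}$, to obtain
\begin{equation*}
\lambda_{1}(g)\leq \lambda_{d-1}(f^{-m_{x}})=\lambda_{n-d+1}(f)^{m_{x}}=\lambda_{c+1}(f)^{m_{x}},
\end{equation*}
where $c=n-d=\codim(Y)$ and $n=\dim X$. The log-concavity $\lambda_{i-1}(f)\lambda_{i+1}(f)\leq \lambda_{i}(f)^{2}$ with $\lambda_{0}(f)=\lambda_{n}(f)=1$ makes $i\mapsto \log\lambda_{i}(f)$ a concave non-negative function vanishing at $i=0,n$; together with $\lambda_{3}(f)<\lambda_{1}(f)$, this forces $\lambda_{i}(f)\leq \lambda_{3}(f)$ for all $i\geq 3$. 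Hence for $c\geq 2$ we get $\alpha_{f}(x)^{m_{x}}\leq \lambda_{1}(g)\leq \lambda_{3}(f)^{m_{x}}$, i.e.\ $\alpha_{f}(x)\leq \lambda_{3}(f)<\lambda_{1}(f)-\e$, contradicting $\alpha_{f}(x)\geq \lambda_{1}(f)-\e$. So $Y_{x}$ must be an $f^{m_{x}}$-periodic irreducible hypersurface.

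To finish, I would invoke a finiteness result: under the hypothesis that $f$ has no invariant non-constant rational function (a property inherited by every iterate, by a standard Galois-theoretic argument on $\QQ(X)$), the set of $f$-periodic irreducible hypersurfaces of $X$ is finite, so their union $Z\subsetneq X$ is a proper Zariski closed subset. Since every $x\in S_{\e}$ lies in some $Y_{x}\subset Z$, this contradicts the Zariski density of $S_{\e}$ and completes the proof. The main obstacle I expect is this last finiteness statement: one must show that an infinite family of $f$-periodic irreducible hypersurfaces would produce a positive-dimensional family in a suitable Chow scheme and thereby an invariant pencil, yielding an invariant non-constant rational function---a Bedford--Diller / Cantat-type argument that has to be adapted from the surface setting to general projective varieties, with care about the singularities and the indeterminacy of $f$.
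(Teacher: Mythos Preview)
Your overall strategy coincides with the paper's: use \cref{mainthm:adclosetodyndeg} to find a point with $\alpha_{f}(x)>\l_{3}(f)$, rule out codimension-one orbit closures via the finiteness of $f$-periodic hypersurfaces (which the paper cites as a known theorem of Cantat and of Bell--Moosa--Topaz, valid on the smooth locus), and then derive a contradiction when $\codim Y\geq 2$ by comparing $\alpha_{f}(x)$ with a dynamical degree of the restriction. One organizational difference: the paper first passes to a smooth open $U\cap V$ on which $f$ is an isomorphism, and chooses $x$ \emph{outside} the union $H$ of the finitely many totally invariant hypersurfaces from the start, so the codimension-one case never arises; you instead let it arise and kill it at the end using the Zariski density of $S_{\e}$. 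Both are fine.

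There is, however, a genuine gap in your codimension-$\geq 2$ step. You invoke a ``restriction inequality $\l_{k}(h|_{Y})\leq \l_{k}(h)$'' for $h=f^{-m_{x}}$ and $k=d-1$. No such general inequality is known; restriction of a rational map to an invariant subvariety does \emph{not} in general decrease the $k$-th dynamical degree for the same index $k$. The result the paper actually uses (\cite[Lemma~2.3]{MW22}) is the index-shifted inequality
\[
\l_{1}\bigl(f^{m}|_{W}\bigr)\ \leq\ \l_{1+\codim W}\bigl(f^{m}\bigr),
\]
valid when $f^{m}$ is isomorphic at the generic point of $W$. Applied directly to $f^{m_{x}}$ (no passage to the inverse, no duality), this gives $\l_{1}(g)\leq \l_{c+1}(f)^{m_{x}}$ in one line, which is the bound you want. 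Your detour through $\l_{1}(g)=\l_{d-1}(g^{-1})$ and a putative $\l_{d-1}(f^{-m_{x}}|_{Y})\leq \l_{d-1}(f^{-m_{x}})$ happens to land on the same number, but the middle inequality is unjustified as stated; if you unwind it, its only available proof is to apply the index-shifted lemma to $h^{-1}=f^{m_{x}}$, which is exactly the paper's route.

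A secondary point you gloss over: to know that the orbit closure is pure-dimensional, that $f$ genuinely permutes its components, and that $f^{m_{x}}|_{Y}$ is birational (equivalently, isomorphic at the generic point of $Y$, which is precisely the hypothesis needed for the lemma above), one should work on a smooth open where $f$ restricts to an open immersion. The paper does this explicitly by passing to $U\cap V$.
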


\begin{remark}
Under the assumption of \cref{mainthm:zdoc-l3<l1}, if $f$ does not admit invariant non-constant rational functions,
then the set of points $x \in X_{f}(\QQ)$ with Zariski dense orbit is dense in $X(\QQ)$
with respect to the adelic topology (in the sense of \cite{Xi22}). See \cref{thm:zdoc-l3<l1}.
\end{remark}

As a corollary, we have:

\begin{corollary}
Let $X$ be a projective variety of dimension three over $\QQ$.
Let $f \colon X \dashrightarrow X$ be a birational map with $\l_{1}(f) > 1$.
Then the Zariski dense orbit conjecture holds for $f$.
\end{corollary}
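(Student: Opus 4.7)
The plan is to reduce the corollary directly to \cref{mainthm:zdoc-l3<l1}. The only thing to check is that the hypothesis $\l_{3}(f) < \l_{1}(f)$ of that theorem is automatically satisfied in the situation at hand.

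Since $f$ is birational, the top dynamical degree equals the topological degree, which is $1$; because $\dim X = 3$, this gives $\l_{3}(f) = 1$. Combined with the assumption $\l_{1}(f) > 1$, this yields the strict inequality
\begin{align}
\l_{3}(f) = 1 < \l_{1}(f).
\end{align}
Hence $f$ satisfies the hypotheses of \cref{mainthm:zdoc-l3<l1}.

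To conclude: if $f$ admits no non-constant invariant rational function, then by \cref{mainthm:zdoc-l3<l1} there exists $x \in X_{f}(\QQ)$ whose orbit $O_{f}(x)$ is Zariski dense in $X$, which is exactly the assertion of \cref{conj_zdo} for $f$. In the other case, if $f$ does admit a non-constant $f$-invariant rational function, then the conjecture is vacuously satisfied. This proves the corollary.

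The step that carries all the content is already contained in \cref{mainthm:zdoc-l3<l1}; there is no genuine obstacle in the corollary itself, apart from recording the elementary fact that birationality forces the top dynamical degree to be $1$.
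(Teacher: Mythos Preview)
Your proof is correct and follows exactly the same approach as the paper: the paper's proof simply records that $\l_{3}(f)=1$ (since $f$ is birational and $\dim X=3$) and then invokes \cref{mainthm:zdoc-l3<l1}. Your version just spells out the same reduction in slightly more detail.
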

\begin{proof}
Since $\l_{3}(f) = 1$, the assumption of \cref{mainthm:zdoc-l3<l1} is satisfied.
\end{proof}

\noindent
{\bf Idea of the proof.}

The idea of the proof of \cref{mainthm:adclosetodyndeg} is as follows.
By a recent work of the second author \cite{xie2024algebraic}, we roughly have
\begin{align}\label{intro-recineq}
&h_{L}(f^{n+2}(x)) - (1+\e) \mu h_{L}(f^{n+1}(x)) \\
&\geq (1-\e)\l_{1}(f)(h_{L}(f^{n+1}(x)) - (1+\e) \mu h_{L}(f^{n}(x)))
\end{align}
for some $0 \leq \mu < \l_{1}(f)$ (after replacing $f$ with its iterate).
The main problem is that we do not know in general if $h_{L}(f^{n+1}(x)) - (1+\e) \mu h_{L}(f^{n}(x)) > 0$
for some $n$.
To find such a point $x$, we consider a curve $C$ such that the degrees of $f^{n}(C)$
grow as fast as possible, i.e.\ in the order of $\l_{1}(f)^{n}$.
Then for a point $x \in C(\QQ)$, we expect inequality $h_{L}(f(x)) \geq \l_{1}(f)h_{L}(x)$ hold.
This is justified for points with large height, but we also need some additional good properties of $x$,
including well-definedness of its $f$-orbit.
The latter property is satisfied for any points in some adelic open subset (in the sense of \cite{Xi22}).
We ensure the existence of $x \in C(\QQ)$ with all desired properties by proving that 
any height function associated with an ample divisor is unbounded on a non-empty
adelic open subset \cref{prop:htunbdd-on-adelicopen}.
Once we find such a point, \cref{intro-recineq} shows $ \alpha_{f}(x) \geq  (1-\e) \l_{1}(f)$.

The idea of the proof of \cref{mainthm:zdoc-l3<l1} is as follows.
By \cref{mainthm:adclosetodyndeg}, there is a point $x$ such that $ \alpha_{f}(x) > \l_{3}(f)$.
It is known that if birational $f$ does not admit invariant non-constant rational functions,
then there are only finitely many totally invariant hypersurfaces.
Thus we may assume the orbit closure $ \overline{O_{f}(x)}$ is either $X$ or
has codimension at least two.
If it is $X$, we are done.
If it has codimension $r \geq 2$, then we can show roughly 
$ \alpha_{f}(x) \leq \l_{1}(f|_{\overline{O_{f}(x)}}) \leq \l_{1 + r}(f) \leq \l_{3}(f)$,
and this is a contradiction.

\noindent
{\bf Convention.}
\begin{itemize}
\item An  \emph{algebraic scheme} over a field $k$ is a separated scheme of finite type over $k$.
\item A  \emph{variety} over $k$ is an algebraic scheme over $k$ which is irreducible and reduced.
\item For a self-morphism $f \colon X \longrightarrow X$ of an algebraic scheme over $k$ and a 
point $x$ of $X$ (scheme point or $k'$-valued point where $k'$ is a field containing $k$), the  \emph{$f$-orbit of $x$}
is denoted by $O_{f}(x)$, i.e. $O_{f}(x) = \{ f^{n}(x) \mid n=0,1,2, \dots\}$.
The same notation is used for dominant rational map $f \colon X \dashrightarrow X$ on a variety $X$ defined over $k$
and $x \in X_{f}(k) = \{ x \in X(k) \mid f^{n}(x) \notin I_{f}, \ \forall n \geq 0\}$.
Here $I_{f}$ is the indeterminacy locus of $f$.

\item
Let $f \colon X \dashrightarrow X$ be a dominant rational map on a variety $X$ over a field $k$.
For a point $x \in X_{f}(k)$, we say $(X,f,x)$ satisfies DML property if
for any closed subset $W \subset X$, the return set $\{n \geq 0 \mid f^{n}(x) \in W\}$
is a finite union of arithmetic progressions.

\item
Let $k$ be an algebraically closed field of characteristic zero.
For a dominant rational map $f \colon X \dashrightarrow X$ on a variety over $k$,
$\l_{i}(f)$ denotes the $i$-th dynamical degree of $f$  for $i = 0,\dots, \dim X$.
The cohomological Lyapunov exponent is denoted by
$\mu_{i}(f) = \l_{i}(f) / \l_{i-1}(f)$ for $i=1,\dots, \dim X$.
We set $\mu_{\dim X + 1}(f) = 0$.
\end{itemize}

\begin{acknowledgements}
The essential part of the work was done during the Simons symposium 
``Algebraic, Complex, and Arithmetic Dynamics (2024)".
The authors would like to thank Simons Foundation, Laura DeMarco, and Mattias Jonsson 
for hosting and organizing the symposium.
The authors also would like to thank Joe Silverman and Long Wang for helpful comments. 
We are grateful to the referee for the careful reading and many valuable comments.
The first author is supported by JSPS KAKENHI Grant Number JP22K13903.
The second author is supported by the NSFC Grant No.12271007.
\end{acknowledgements}

\section{Height unboundedness on adelic open sets}

In this section, we prove that a height function associated with an ample divisor 
is unbounded on a non-empty adelic open subset.
For an algebraic scheme $X$ over $\QQ$, 
the adelic topology is a topology on $X(\QQ)$ introduced by the second author in \cite{Xi22}.
The definition involves several steps, so we do not write down it here and refer to \cite[Section 3]{Xi22}
for the definition and basic properties.
The point of the topology is that it allows us to discuss analytic local properties of $\QQ$-points
(because it is defined by using $p$-adic open sets)
while keeping coarseness of Zariski topology; if $X$ is irreducible, then $X(\QQ)$ is irreducible with respect to
the adelic topology.

\begin{definition}
Let $X$ be a quasi-projective scheme over $\QQ$.
A subset $A \subset X(\QQ)$ is said to be height bounded if the following condition holds.
For any immersion $i \colon X \hookrightarrow P$ into a projective scheme $P$ defined over $\QQ$,
any ample Cartier divisor $H$ on $P$, and any logarithmic Weil height function $h_{H}$ associated with $H$,
the subset
\begin{align}
\left\{ h_{H}(i(x)) \ \middle|\ x \in A \right\} \subset \R
\end{align}
is bounded.
\end{definition}

\begin{remark}\label{rem:htcomparisonqproj}
The set is always bounded below since so is $h_{H}$.
The definition remains equivalent if we require the boundedness only for some $i \colon X \hookrightarrow P$,
$H$, and $h_{H}$.
Indeed, if $j \colon X \hookrightarrow P'$ is another immersion to a projective scheme, $H'$ is an ample Cartier divisor on $P'$,
and $h_{H'}$ is a height associated with $H'$, form the following diagram:
\begin{equation}
\begin{tikzcd}
&& P  \\
X \arrow[r] \arrow[urr, "i", bend left=20] \arrow[drr, "j", swap,bend right=20] & \overline{X} \arrow[r] \arrow[ur, "p_{1}"] \arrow[dr, "p_{2}", swap]  & P \times P' \arrow[d, "\pr_{2}"] \arrow[u,"\pr_{1}",swap]\\
&& P'
\end{tikzcd}
\end{equation}
where $ \overline{X}$ is the scheme theoretic closure of $(i,j)(X)$ in $P \times P'$.
Take $n \geq 1$ so that
\begin{align}
{p_{2}}_{*} \big(  p_{1}^{*}\O_{P}(-H) {\otimes}_{\O_{ \overline{X}}} p_{2}^{*}\O_{P'}(nH')  \big) \simeq
\big( {p_{2}}_{*}p_{1}^{*}\O_{P}(-H) \big) {\otimes}_{\O_{P'}} \O_{P'}(nH')
\end{align}
is globally generated.
Note that $p_{2}^{-1}(j(X)) = X$.
Then the base locus of $np_{2}^{*}H' - p_{1}^{*}H$ is contained in $ \overline{X} \setminus X$,
and hence $nh_{H'} - h_{H} \geq O(1)$ on $X(\QQ)$.
Similarly, there is an $m \geq 1$ such that $m h_{H} - h_{H'} \geq O(1)$ on $X(\QQ)$.
Thus we are done.
\end{remark}

We use the notation and terminologies on adelic open subsets from \cite[Section 3]{Xi22}.

\begin{proposition}\label{prop:htunbdd-on-adelicopen}
Let $X$ be a quasi-projective variety over $\QQ$ with $\dim X \geq 1$.
Let $A \subset X(\QQ)$ be a non-empty adelic open subset in the sense of \cite{Xi22}.
Then $A$ is not height bounded.
\end{proposition}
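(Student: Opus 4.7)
The plan is to prove this by contradiction, combining the classical Northcott finiteness theorem with the irreducibility property of the adelic topology established in \cite{Xi22}. Suppose for contradiction that $A$ is height bounded: fix an immersion $\iota \colon X \hookrightarrow P$ into a projective $\QQ$-variety, an ample divisor $H$ on $P$, and an associated height $h_{H}$; by hypothesis there is a constant $C > 0$ with $h_{H}(\iota(x)) \leq C$ for every $x \in A$.

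The first substantive step is to reduce the problem to a single number field. Because the adelic topology from \cite{Xi22} is generated by open sets defined by $v$-adic conditions at finitely many places, and each such basic open set is described by data defined over a number field, one can pick a point of $A$ and choose a number field $K$ large enough that $A \cap X(K)$ is a non-empty open subset of $X(K)$ for the adelic topology on $X(K)$. After possibly enlarging $K$ further, one may also arrange that $X(K)$ is Zariski dense in $X$, which is possible for any $\QQ$-variety of positive dimension.

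Next, invoke the irreducibility of the adelic topology on $X(K)$: any two non-empty adelic open subsets of $X(K)$ must meet. Applying this to $A \cap X(K)$ together with $U(K)$ for an arbitrary non-empty Zariski open $U \subset X$ (noting $U(K) \neq \emptyset$ since $X(K)$ is Zariski dense), one concludes that $A \cap X(K)$ is Zariski dense in $X$, and hence infinite, because $\dim X \geq 1$.

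The contradiction is now immediate from Northcott's theorem. Since $\iota(A \cap X(K))$ is contained in $\{y \in P(K) \mid h_{H}(y) \leq C\}$, which is finite by the classical Northcott theorem applied to $P$ over the number field $K$, the set $A \cap X(K)$ must be finite, contradicting the infinitude established above. The main obstacle is the descent step of the second paragraph, which requires a careful unravelling of the definition of the adelic topology from \cite{Xi22} to guarantee that $A \cap X(K)$ really is adelic open in $X(K)$; once this is justified, the rest of the argument combines only adelic irreducibility and the classical Northcott theorem.
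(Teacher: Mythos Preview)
Your overall structure---assume height boundedness, derive finiteness over bounded degree via Northcott, contradict Zariski density of $A$---matches the paper's proof. The genuine gap is the descent step, and it is not merely a matter of unwinding definitions.

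First, the adelic topology of \cite{Xi22} is defined on $X(\QQ)$, not on $X(K)$ for a fixed number field; there is no ``adelic topology on $X(K)$'' with a proven irreducibility property to invoke. If you tried to define one by restricting the $p$-adic conditions to places of $K$, irreducibility would amount to a strong form of weak approximation for $X$ over $K$, which fails for general varieties. Second, adelic open sets in \cite{Xi22} are finite intersections of \emph{general} adelic subsets, i.e.\ images of basic adelic subsets under flat morphisms $\pi\colon Y\to X$; there is no reason the preimage structure and the image operation should be compatible with restricting to a single number field. So neither ``$A\cap X(K)$ is adelic open in $X(K)$'' nor ``the adelic topology on $X(K)$ is irreducible'' is available.

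The paper resolves this not by descending to one field $K$ but by descending to \emph{bounded degree}: it first reduces to a basic adelic subset $B\subset Y(\QQ)$ via a flat $\pi\colon Y\to X$, then proves a separate lemma showing that for some $d\ge 1$ the set $B\cap Y_K(d)$ is Zariski dense. That lemma is the real work: it uses Noether normalization to get a finite \'etale $\pi\colon Y^\circ\to V\subset\A^n_K$, Hilbert irreducibility (thin sets) to control the fibers, and weak approximation on affine space to produce $K$-points of $V$ lying in all the required $p$-adic opens. The fibers over those points are then integral of degree $\le\deg\pi$, giving the bound $d$. Northcott then applies to $X_K(d)$, and flatness of $\pi$ pushes the finiteness back to $B\cap Y_K(d)$, contradicting the density. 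Replacing $X(K)$ by $X_K(d)$ is what makes the argument go through, and establishing density in $X_K(d)$ is the substance of the proposition.
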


To prove this proposition, we prepare some terminologies and a lemma.

\begin{definition}
Let $K \subset \QQ$ be a number field. 
For an algebraic scheme $X$ over $K$ and $d \in \Z_{\geq 1}$, we define
\begin{align}
X(d):= \bigcup_{\substack{K \subset L \subset \QQ \\ [L:K] \leq d }} X(L) \subset X(\QQ),
\end{align}
where each $X(L)$ is regarded as a subset of $X(\QQ)$ via the inclusion $L \subset \QQ$.
\end{definition}

\begin{lemma}\label{lem:zd-adopen}
Let $X$ be a quasi-projective variety over $\QQ$ with $\dim X \geq 1$.
Let $A \subset X(\QQ)$ be a non-empty basic adelic subset in the sense of \cite[Section 3]{Xi22}.
Let $K \subset \QQ$ be a number field and $X_{K}$ a model of $X$ over $K$.
Then there is $d \in \Z_{\geq 1}$ such that $A \cap X_{K}(d)$ is Zariski dense in $X$.
\end{lemma}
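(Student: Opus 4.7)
The plan is to unwind the definition of basic adelic open subset from \cite[Section 3]{Xi22}: we may write $A = \iota^{-1}(U)$ for some finite extension $K \subset L \subset \QQ$ (which we are free to enlarge), some place $v$ of $L$, some non-empty $v$-adic open subset $U \subset X_L(\overline{L_v})$ (where $X_L := X_K \times_K L$), and some embedding $\iota \colon \QQ \hookrightarrow \overline{L_v}$ extending $L \subset \overline{L_v}$. I will show that $d := [L:K]$ works.

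The first step is to arrange an $L$-rational smooth point $Q_0 \in U \cap X_L(L)$. Since the singular locus has strictly smaller dimension than $X$, the set $X^{\mathrm{sing}}_L(\overline{L_v})$ has empty $v$-adic interior in $X_L(\overline{L_v})$, so replacing $U$ by $U \cap X^{\mathrm{sm}}_L(\overline{L_v})$ preserves non-emptiness. Algebraic points are $v$-adically dense in $X_L(\overline{L_v})$ (which follows from Krasner's lemma via local \'etale coordinates), so $U$ contains some $Q_0 \in X_L(\overline{L})$; replacing $L$ by the field of definition of $Q_0$ (this does not alter $A$ as a subset of $X(\QQ)$, because $\overline{L_v}$ also serves as an algebraic closure of the completion at any extended place), we may assume $Q_0 \in X_L(L)$.

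The second step is the non-archimedean implicit function theorem. Choose a system of local parameters $t_1,\dots,t_n \in \O_{X_L, Q_0}$ at $Q_0$ (with $n = \dim X$), yielding an \'etale $L$-morphism $\phi \colon W \to \A^n_L$ on an affine open neighborhood $W$ of $Q_0$ sending $Q_0$ to $0$. The non-archimedean implicit function theorem then produces a $v$-adic homeomorphism from an open ball $B \subset \overline{L_v}^n$ around $0$ onto a $v$-adic open neighborhood $W' \subset W(\overline{L_v})$ of $Q_0$; after shrinking, we may assume $W' \subset U$. Since $\phi$ is defined over $L$, its analytic inverse is given by convergent power series with $L$-coefficients, so every $L$-rational $(a_1,\dots,a_n) \in B \cap L^n$ lifts to a point of $X_L(L) \cap W' \subset A \cap X_K(d)$.

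For Zariski density, let $Z \subsetneq X$ be any proper closed subset. Then $\phi(Z \cap W') \subset B$ is a proper $v$-adic analytic subset of the ball (since $\dim Z < \dim X$), so it has empty $v$-adic interior; combined with the density of $L$ in $L_v$, this yields a point $(a_1,\dots,a_n) \in B \cap L^n$ avoiding $\phi(Z \cap W')$, and the corresponding $L$-point $P \in X_L(L)$ lies in $A \cap X_K(d)$ but not in $Z(\QQ)$. Hence $A \cap X_K(d)$ meets the complement of every proper closed subset of $X$, which gives Zariski density. The main technical point is the bookkeeping in the first step: checking that the notion of basic adelic subset from \cite{Xi22} permits enlarging $L$ and shrinking $U$ without altering $A \subset X(\QQ)$; the remaining ingredients are standard non-archimedean analytic geometry.
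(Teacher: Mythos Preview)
Your argument has a genuine gap: you have misread the definition of a basic adelic subset in \cite[Section~3]{Xi22}. Such a subset is not determined by a single place, but by finitely many embeddings $\tau_{1},\dots,\tau_{m}$ into possibly different $\C_{p_{\tau_i}}$ together with open sets $U_{i} \subset X_{K}(\C_{p_{\tau_i}})$; in the paper's notation one has $A = X_{K}((\tau_{i},U_{i}),\, i=1,\dots,m)$, and a point $x \in X(\QQ)$ lies in $A$ only if it lands in every $U_{i}$ under some extension of $\tau_{i}$. Your entire argument treats the case $m=1$. When $m \geq 2$ the implicit-function-theorem construction breaks down: you would need a single smooth $L$-rational point $Q_{0}$ lying simultaneously in all the $U_{i}$, which amounts to weak approximation for $X$ and is not available for an arbitrary variety. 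This is precisely the obstacle the paper overcomes by passing to a Noether normalization $\pi \colon X_{K}^{\circ} \to V \subset \A^{d}_{K}$, where weak approximation does hold, and then invoking Hilbert irreducibility so that the fibre $\pi^{-1}(x)$ over a suitable $x \in V(K)$ is a single field point of bounded degree that can be matched with each $U_{i}$ separately.

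There is also a smaller error in your second step. From the fact that the analytic inverse of $\phi$ is given by power series with $L$-coefficients you conclude that $L$-rational inputs $(a_{1},\dots,a_{n})$ produce $L$-rational lifts in $X_{L}(L)$. This is false: evaluating a convergent power series with $L$-coefficients at $a \in L^{n}$ yields a value in $L_{v}$, not in $L$. The lift is algebraic (it sits in the fibre $\phi^{-1}(a)$), but its residue field over $L$ can have degree up to $\deg\phi$; for instance, with $\phi(x)=x^{2}-x$ on an open in $\A^{1}_{L}$ and $Q_{0}=0$, the nearby lift over $a$ lives in $L(\sqrt{1+4a})$. So even in the one-place case your choice $d=[L:K]$ is too small; one needs $d=[L:K]\cdot\deg\phi$. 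This particular point is easily repaired, but the multiple-place issue is not.
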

This follows from the proof of \cite[Proposition 3.9]{Xi22}.
We include here a proof for the completeness.

\begin{proof}
By replacing $K$ with a finite extension and replacing $A$ with an appropriate subset,
we may assume $A$ is a basic adelic subset over $K$ with respect to $X_{K}$.
Moreover, we may assume 
\begin{align}
A = X_{K}((\tau_{i}, U_{i}),i=1,\dots, m)
\end{align}
where $\tau_{i} \colon K \longrightarrow \C_{p_{\tau_{i}}}$ are field embeddings such that
$ | \ |_{i} := | \tau_{i}(\ ) |_{\C_{p_{\tau_{i}}}}$ are distinct absolute values on $K$,
and as usual $U_{i} \subset X_{K}(\C_{p_{\tau_{i}}})$ are non-empty $p_{\tau_{i}}$-adic open subsets.
(see the beginning of the proof of \cite[Proposition 3.9]{Xi22} for more details. )
Let $K_{p_{i}}$ be the closure of $\tau_{i}(K)$ in $\C_{p_{\tau_{i}}}$.
By further replacing $K$ with a finite extension, we may assume
$U_{i} \cap X_{K}(K_{p_{i}}) \neq  \emptyset$.
Note that this in particular implies $U_{i} \cap X_{K}(K_{p_{i}})$ is Zariski dense in $(X_{K})_{K_{p_{i}}}$.

By Noether normalization, there is a non-empty open subscheme $X_{K}^{\circ} \subset X_{K}$ with 
finite \'etale morphism 
\begin{align}
\pi \colon X_{K}^{\circ} \longrightarrow V
\end{align}
to an open subscheme $V \subset \A^{d}_{K}$ of an affine space.
By taking a connected Galois \'etale covering of $V$ dominating $X_{K}^{\circ}$ (see \cite[Proposition 3.2.10]{etalecohomology-fu} for example) 
and applying it to \cite[Proposition 3.3.1]{galois-serre}, there is a thin subset $Z \subset V(K)$ such that for all $x \in V(K) \setminus Z$,
the scheme theoretic inverse image $\pi^{-1}(x)$ is integral, i.e.\ it is of the form $\Spec \text{(field)}$.

Let $W_{i} = \pi(U_{i} \cap X_{K}^{\circ}(K_{p_{i}}))$, which is a non-empty open subset of
$V(K_{p_{i}})$.
\begin{claim}\label{claim:VZW-density}
The set 
\begin{align}
(V(K) \setminus Z) \cap \bigcap_{i=1}^{m} W_{i}
\end{align}
is Zariski dense in $V$.
\end{claim}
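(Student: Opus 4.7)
The plan is to combine weak approximation at the $m$ distinct places of $K$ corresponding to the $|\cdot|_{i}$ with Hilbert's irreducibility theorem in its strong form (with local conditions), which handles the removal of the thin set $Z$. I would reduce the Zariski density statement to the following non-emptiness claim: for every proper Zariski closed subset $Y \subsetneq V$, the set $((V \setminus Y)(K) \cap \bigcap_{i=1}^{m} W_{i}) \setminus Z$ is non-empty.

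First, since the absolute values $|\cdot|_{i}$ on $K$ are pairwise distinct, they correspond to pairwise distinct places, so by weak approximation $K$ is dense in $\prod_{i=1}^{m} K_{p_{i}}$, and consequently $V(K)$ is dense in $\prod_{i=1}^{m} V(K_{p_{i}})$ since $V$ is open in $\A_{K}^{d}$. Next, for any proper closed $Y \subsetneq V$ and each $i$, the intersection $W_{i} \cap (V \setminus Y)(K_{p_{i}})$ is a non-empty $p_{i}$-adic open subset of $V(K_{p_{i}})$: it is open because $Y$ is Zariski closed in $V$, and it is non-empty because the $p_{i}$-adic closure of $Y(K_{p_{i}})$ in $V(K_{p_{i}})$ is nowhere $p_{i}$-adically dense, whereas $W_{i}$ has non-empty $p_{i}$-adic interior by hypothesis.

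The core step is to invoke Hilbert irreducibility with local conditions for the Hilbertian number field $K$: given any thin subset of $\A_{K}^{d}(K)$, any non-empty Zariski open subscheme $U \subset \A_{K}^{d}$, and any non-empty $v$-adic open subsets at finitely many pairwise distinct places $v$, the set of $K$-points of $U$ lying in all of the local open sets and outside the thin set is Zariski dense. Applying this to the thin set $Z$, the open subscheme $V \setminus Y \subset \A_{K}^{d}$, and the local open sets $W_{i} \cap (V \setminus Y)(K_{p_{i}})$, I obtain the required $K$-point. To justify the input, one checks that the restriction of $Z$ to $V \setminus Y$ remains thin, which is immediate from the definition of a thin set as a finite union of images of generically finite covers and of proper closed subvarieties, since each such datum pulls back and restricts to the same kind of datum over the open subscheme $V \setminus Y$.

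The main obstacle is identifying and correctly invoking the version of Hilbert irreducibility compatible with weak approximation at a finite set of places; this is a classical result for number fields, treated in detail in Serre's lectures on the Mordell--Weil theorem, and is in effect already used in the proof of \cite[Proposition 3.9]{Xi22} cited in the statement. Once this result is granted, the remaining verifications are routine.
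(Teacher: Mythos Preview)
Your proposal is correct and follows essentially the same approach as the paper. Both arguments reduce Zariski density to showing that for each proper closed $Y\subsetneq V$ the set $(V(K)\setminus Z)\cap\bigcap_i W_i\cap (V\setminus Y)(K)$ is non-empty, observe that $W_i\cap (V\setminus Y)(K_{p_i})$ is a non-empty $p_i$-adic open, and then invoke the same input---the paper phrases it as density of $\psi(V(K)\setminus Z)$ in $\prod_i V(K_{p_i})$ via \cite[Lemma 3.11]{Xi22}, while you phrase it as Hilbert irreducibility with local conditions \`a la Serre, which is the same statement.
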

\begin{proof}[Proof of \cref{claim:VZW-density}]
Suppose it is contained in a proper Zariski closed subset $C \subset V$.
Let $\psi \colon V(K) \longrightarrow \prod_{i=1}^{m}V(K_{p_{i}}), x \mapsto (x, \dots, x)$
be the diagonal embedding.
Then we have
\begin{align}
\psi^{-1}\bigg(  \prod_{1 \leq i \leq m} (V \setminus C)(K_{p_{i}})  \cap \prod_{1 \leq i \leq m} W_{i}\bigg) \cap (V(K) \setminus Z) =  \emptyset.
\end{align}
Since $W_{i}$ are Zariski dense in $V_{K_{p_{i}}}$, 
\begin{align}
\prod_{1 \leq i \leq m} (V \setminus C)(K_{p_{i}})  \cap \prod_{1 \leq i \leq m} W_{i}
\end{align}
is a non-empty open subset of $\prod_{i=1}^{m}V(K_{p_{i}})$.
But by the same proof of \cite[Lemma 3.11]{Xi22}, $\psi(V(K) \setminus Z)$ is dense in $\prod_{i=1}^{m}V(K_{p_{i}})$.
Thus we get a contradiction.
\end{proof}

Let $x \in (V(K) \setminus Z) \cap \bigcap_{i=1}^{m} W_{i}$.
Then $\pi^{-1}(x) = \Spec L$ for some finite field extension $L$ of $K$.
Note that $[L:K] \leq \deg \pi$.
Fixing a field embedding $L \to \QQ$ over $K$ and we get a point $z \in X_{K}^{\circ}(\QQ) \subset X_{K}(\QQ)$:
\begin{equation}
\begin{tikzcd}
X_{K}^{\circ}  \arrow[d, "\pi",swap] & \Spec L \arrow[d] \arrow[l]& \Spec \QQ \arrow[l]\\
V & \Spec K \arrow[l,"x"].
\end{tikzcd}
\end{equation}
Since $x \in W_{i} = \pi(U_{i}\cap X_{K}^{\circ}(K_{p_{i}}))$, there is  a point $y_{i} \in U_{i}\cap X_{K}^{\circ}(K_{p_{i}})$ such that
$\pi(y_{i}) = x$. Then we get the following diagram
\begin{equation}
\begin{tikzcd}
&\Spec K_{p_{i}} \arrow[d] \arrow[ld, "y_{i}",swap] & \Spec \C_{p_{\tau_{i}}} \arrow[l] \arrow[d, dashed]\\
X_{K}^{\circ}  \arrow[d, "\pi",swap] & \Spec L \arrow[d] \arrow[l]& \Spec \QQ \arrow[l]\\
V & \Spec K \arrow[l,"x"]
\end{tikzcd}
\end{equation}
where the dashed arrow is induced by extending $L \longrightarrow K_{p_{i}} \hookrightarrow \C_{p_{\tau_{i}}}$
to $\QQ \longrightarrow \C_{p_{\tau_{i}}}$. This embedding restricted on $K$ agrees with $\tau_{i}$.
This means $z \in X_{K}^{\circ}(\tau_{i}, U_{i}\cap X_{K}^{\circ}(\C_{p_{\tau_{i}}})) \subset X_{K}(\tau_{i}, U_{i})$.
Therefore we proved $z \in A \cap X_{K}(\deg \pi)$.
Since $x$ is an arbitrary element of $(V(K) \setminus Z) \cap \bigcap_{i=1}^{m} W_{i}$, which is Zariski dense in $V$,
these $z$'s are Zariski dense in $X$ and we are done.

\end{proof}

\begin{proof}[Proof of \cref{prop:htunbdd-on-adelicopen}]
We may assume $A$ is a general adelic subset, i.e.\ 
there is a flat morphism $\pi \colon Y \longrightarrow X$ from a reduced 
algebraic scheme over $\QQ$
and a basic adelic subset $B \subset Y(\QQ)$ such that $A = \pi(B)$.
By replacing $Y$ with a small open affine subscheme of an irreducible component intersecting with $B$,
we may assume $Y$ is a quasi-projective variety.
Let $K \subset \QQ$ be a number field such that $X, Y$, and $\pi$ are defined over $K$.
Let $\pi_{K} \colon Y_{K} \longrightarrow X_{K}$ be their model.
Now suppose $A$ is height bounded.
Then for all $d \in \Z_{\geq 1}$, $A \cap X_{K}(d)$ are finite sets because of
Northcott's theorem.
Since $B \cap Y_{K}(d) \subset \pi^{-1}(A \cap X_{K}(d))$, $\pi$ is flat, and $\dim X \geq 1$,
$B \cap Y_{K}(d)$ is not Zariski dense in $Y$ for all $d \in \Z_{\geq 1}$.
This contradicts \cref{lem:zd-adopen}.
\end{proof}

\begin{remark}
The proof also shows the following. Let $X$ be a quasi-projective variety over $\QQ$
and let $A \subset X(\QQ)$ be a non-empty adelic open subset.
Let $K \subset \QQ$ be a number field and $X_{K}$ a model of $X$ over $K$.
Then there is a $d \in \Z_{\geq 1}$ such that $A \cap X_{K}(d)$ is Zariski dense in $X$.

\end{remark}

\section{Arithmetic degree can be arbitrarily close to dynamical degree}

In this section, we prove \cref{mainthm:adclosetodyndeg}.
As mentioned in \cref{rmk:densewrtadtop}, 
we prove the density with respect to the adelic topology introduced in \cite{Xi22}.
Since the adelic topology is finer than the Zariski topology induced on $X(\QQ)$ (see \cite[Proposition 3.18]{Xi22}),
\cref{mainthm:adclosetodyndeg} follows.

\begin{theorem}\label{thm:advsdd}
Let $X$ be a quasi-projective variety over $\QQ$.
Let $f \colon X \dashrightarrow X$ be a dominant rational map defined over $\QQ$.
Then for any $\e > 0$, the set
\begin{align}
\left\{ x \in X_{f}(\QQ) \ \middle|\ \text{$ \alpha_{f}(x)$ exists and $\alpha_{f}(x) \geq \l_{1}(f) - \e$ } \right\}
\end{align}
is dense in  $X(\QQ)$ with respect to the adelic topology.
\end{theorem}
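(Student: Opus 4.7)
The plan is to follow the strategy sketched in the introduction. First I would reduce to the projective case by taking a projective closure $\iota\colon X\hookrightarrow X'$ and extending $f$ to $f'\colon X'\dashrightarrow X'$; by the remark following \cref{mainthm:adclosetodyndeg}, the arithmetic and dynamical degrees and the set $X_{f}(\QQ)$ are compatible with this procedure, and the adelic topology on $X(\QQ)$ is the subspace topology from $X'(\QQ)$. I may also replace $f$ by an iterate $f^{N}$, since $\alpha_{f^{N}}(x)=\alpha_{f}(x)^{N}$ and $\lambda_{1}(f^{N})=\lambda_{1}(f)^{N}$, so an inequality $\alpha_{f^{N}}(x)\ge \lambda_{1}(f^{N})-\varepsilon'$ for small enough $\varepsilon'$ yields the desired conclusion for $f$. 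Assuming then $X$ projective and $f$ replaced by an appropriate iterate, I would invoke the recursive inequality from \cite{xie2024algebraic} asserting the existence of $0\le \mu<\lambda_{1}(f)$ and a constant $c_{0}$ with
\begin{equation*}
h_{L}(f^{n+2}(x)) - (1+\varepsilon)\mu\, h_{L}(f^{n+1}(x)) \ge (1-\varepsilon)\lambda_{1}(f)\bigl(h_{L}(f^{n+1}(x)) - (1+\varepsilon)\mu\, h_{L}(f^{n}(x))\bigr) - c_{0}
\end{equation*}
for every $x\in X_{f}(\QQ)$ and every $n\ge 0$. The task is then, inside any prescribed non-empty adelic open $U\subset X(\QQ)$, to find a point $x\in U\cap X_{f}(\QQ)$ at which the starting term $h_{L}(f(x))-(1+\varepsilon)\mu\, h_{L}(x)$ is large enough to overcome $c_{0}$, for then the recursion cascades and forces $\alpha_{f}(x)\ge (1-\varepsilon)\lambda_{1}(f)$.

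To produce such a point I would exploit a maximally moving curve. The definition of $\lambda_{1}(f)$ together with duality on $N^{1}(X)_{\R}$ ensures that a general complete intersection curve $C=H_{1}\cap\cdots\cap H_{\dim X-1}$ in sufficiently ample classes $H_{i}$ satisfies $\deg_{L}(f^{n}(C))\asymp \lambda_{1}(f)^{n}$, because $[C]$ has non-trivial pairing with the leading $f^{*}$-eigendirection on $N^{1}(X)_{\R}$. Height comparison on curves then yields, for $y\in C(\QQ)$ of sufficiently large $h_{L|_{C}}$-value, the single-step estimate $h_{L}(f(y))\ge \lambda_{1}(f)\, h_{L}(y)-O(1)$; this already produces the required positive starting term.

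To intersect the prescribed adelic open $U$, I would let the complete intersection $C$ vary in its family: a general choice can be arranged so that the base change of $C$ over appropriate completions meets $U$, producing a non-empty adelic open subset $A\subset C(\QQ)$. The condition that the $f$-orbit of $y$ is well-defined, $f^{n}(y)\notin I_{f}$ for all $n$, can be imposed as an adelic open condition on $y$ along $C$, and is compatible with a generic choice of $C$ that does not lie in the countable union $\bigcup_{n}f^{-n}(I_{f})$ of proper subvarieties. Applying \cref{prop:htunbdd-on-adelicopen} to $C$ and to the non-empty adelic open subset $A'\subset A$ cut out by the well-definedness condition, the height $h_{L|_{C}}$ is unbounded on $A'$, so I may pick $y\in A'\subset U\cap X_{f}(\QQ)$ with $h_{L|_{C}}(y)$ large enough to activate the recursion, giving $\alpha_{f}(y)\ge \lambda_{1}(f)-\varepsilon$.

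The main obstacle I anticipate is the coexistence of all the genericity hypotheses on a single curve: $C$ must be generic enough to realize $\deg_{L}(f^{n}(C))\asymp \lambda_{1}(f)^{n}$ and to avoid the preimages of the indeterminacy, while still meeting the \emph{specific} adelic open $U$ in a non-empty adelic open of $C$. Making these conditions cohabit, and controlling the implicit constants in the height comparison so that the starting term can dominate $c_{0}$, is the heart of the argument; once this is set up, the cascading inequality from \cite{xie2024algebraic} mechanically yields $\alpha_{f}(x)\ge \lambda_{1}(f)-\varepsilon$.
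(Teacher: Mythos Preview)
Your overall strategy matches the paper's: reduce to the projective case, use the recursive height inequality coming from \cite{xie2024algebraic}, find a curve $C$ with maximal degree growth, and apply \cref{prop:htunbdd-on-adelicopen} on $C$ to produce a point of large height that activates the recursion. You also correctly identify the crux of the argument in your last paragraph.

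There are two genuine gaps, and the paper's proof resolves both with one idea that you do not have. First, the recursive inequality does \emph{not} hold for all $x\in X_{f}(\QQ)$ as you assert: the class coming from \cite{xie2024algebraic} is only big, so the height inequality holds only on a Zariski open $V(\QQ)$, and you need the entire orbit of $x$ to stay in $V$. Second, your mechanism for making $C$ meet the given adelic open $U$ (``let $C$ vary in its family'') is vague, and your suggestion that a generic $C$ avoids the countable union $\bigcup_{n}f^{-n}(I_{f})$ is problematic over the countable field $\QQ$.

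The paper's resolution is to reverse the order: \emph{first} find, using \cite[Propositions~3.24, 3.27]{Xi22}, a non-empty adelic open $A'\subset V(\QQ)$ on which every point has well-defined infinite orbit contained in $V$ and satisfying DML; \emph{then} pick a single point $a\in A\cap A'$ (where $A$ is your prescribed adelic open); and \emph{finally} construct the complete intersection curve $C$ through $a$ by a Bertini-type argument (\cref{lem:bertini}) applied to the sub-linear system of $|L|$ consisting of sections vanishing at $a$, arranging in addition that $C$ avoids $I_{g^{l}}$ for the single fixed iterate $g^{l}$ needed. Since $a\in C$, the set $C(\QQ)\cap A\cap A'$ is automatically a non-empty adelic open subset of $C(\QQ)$, and every point in it has orbit contained in $V$, so the recursion applies. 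This ``point first, curve second'' trick is the missing ingredient that makes your three genericity requirements cohabit.
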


\begin{proof}
By replacing $X$ with its smooth locus, we may assume $X$ is smooth.
Let us take a projective closure $\iota \colon X \hookrightarrow X'$, i.e.\ $X'$ is a projective variety over $\QQ$
and $\iota$ is an open immersion. By replacing $X'$ with its normalization, we may assume $X'$ is normal.
Let $L$ be a very ample divisor on $X'$.  
We take $L$ so that the embedding $X' \hookrightarrow \P^{N}_{\QQ}$ by the complete linear system $|L|$
is not an isomorphism. 
We regard $f$ as a dominant rational self-map on $X'$.
Let us write $\l_{i} = \l_{i}(f)$ and $\mu_{i} = \mu_{i}(f)$.
Note that by the log concavity of dynamical degrees (and the convention $\mu_{\dim X + 1}=0$), we have
$\mu_{1} \geq \cdots \geq \mu_{\dim X} > \mu_{\dim X + 1} = 0$.
To prove the theorem, we may assume $\l_{1} > 1$.
Take $p \in \{1, \dots, \dim X\}$ such that
\begin{align}
\mu_{1} = \cdots = \mu_{p} > \mu_{p+1}.
\end{align}

Let $\e > 0$ be an arbitrary positive number.
Let $A \subset X(\QQ)$ be an arbitrary non-empty adelic open subset.
We will construct a point $x \in X_{f}(\QQ) \cap A$ such that $ \alpha_{f}(x) \geq \l_{1} -\e$.

Take $\z \in (0,1)$, which is close to $1$, such that
\begin{align}\label{condition-zeta}
\frac{\mu_{p+1}}{\z^{3}\mu_{p}} < 1,  \quad \z^{2}\mu_{p} > 1,\quad \z^{2}\l_{1} \geq \l_{1} - \e.
\end{align}
By \cite[Remark 3.7]{xie2024algebraic}, 
there is an $m_{\z} \geq 1$ such that for all $m \geq m_{\z}$,
\begin{align}\label{recbig}
(f^{2m})^{*}L + (\mu_{p}\mu_{p+1})^{m}L - (\z\mu_{p})^{m}(f^{m})^{*}L
\end{align}
is big as elements of $\widetilde{\Pic}(X')_{\R}$.
Here $ \widetilde{\Pic}(X')_{\R}$ is the colimit of $\Pic (X'')_{\R}$ where $X''$ runs over birational models of $X'$.
The pull-backs $(f^{2m})^{*}, (f^{m})^{*}$ are defined in a natural way using higher birational models.
See \cite{xie2024algebraic} for the detail.
We fix an $m \geq m_{\z}$ so that
\begin{align}\label{nondegcond}
\z^{2m} \mu_{p}^{m} + \z^{-2m}\mu_{p+1}^{m} \leq \z^{m}\mu_{p}^{m}
\end{align}
holds. Such $m$ exists because of \cref{condition-zeta}.
Let us fix a Weil height function $h_{L} \colon X'(\QQ) \longrightarrow \R$ associated with $L$.
We choose $h_{L}$ so that $h_{L} \geq 1$.
By \cref{recbig}, there are $c \in \R$ and a Zariski open dense subset $V \subset X$ such that
\begin{align}
&V \cap I_{f^{m}} = V \cap I_{f^{2m}} =  \emptyset\\
& h_{L} \circ f^{2m} + (\mu_{p} \mu_{p+1})^{m} h_{L} - (\z \mu_{p})^{m} h_{L} \circ f^{m} \geq c \quad \text{on $V(\QQ)$.}
\end{align}
Then by \cref{nondegcond}, we have
\begin{align}
 h_{L} \circ f^{2m} + (\z^{2m}\mu_{p}^{m}) (\z^{-2m}\mu_{p+1}^{m} )h_{L} - (\z^{2m}\mu_{p}^{m} + \z^{-2m}\mu_{p+1}^{m} ) h_{L} \circ f^{m} \geq c
\end{align}
or, equivalently
\begin{align}
 h_{L} \circ f^{2m} - \z^{-2m} \mu_{p+1}^{m} h_{L}\circ f^{m} 
 \geq 
 \z^{2m} \mu_{p}^{m} ( h_{L} \circ f^{m} - \z^{-2m} \mu_{p+1}^{m} h_{L}) + c
\end{align}
on $V(\QQ)$.
If we take $c_{1} \in \R$ so that $c_{1} - \z^{2m} \mu_{p}^{m} c_{1} = c$, then we have
\begin{align}\label{htrecineq}
 &h_{L} \circ f^{2m} - \z^{-2m} \mu_{p+1}^{m} h_{L}\circ f^{m} -c_{1}\\
 &\geq 
 \z^{2m} \mu_{p}^{m} ( h_{L} \circ f^{m} - \z^{-2m} \mu_{p+1}^{m} h_{L} - c_{1})
\end{align}
on $V(\QQ)$.
This recursive inequality almost shows that the arithmetic degree is at least $\z^{2m} \mu_{p}^{m}$.
What we need to show is that there is at least one initial point at which  $h_{L} \circ f^{m} - \z^{-2m} \mu_{p+1}^{m} h_{L} - c_{1}$
is strictly positive.
We will find such point on a curve whose forward iterates by $f^{m}$ have maximal degree growth.
But we first need to guarantee that there are plenty of points whose orbits are well-defined and 
have nice properties.

By \cite[Proposition 3.24, Proposition 3.27]{Xi22} (see proof of \cite[Proposition 3.2]{MW22} as well), 
there is a non-empty adelic open subset $A' \subset V(\QQ)$ such that
for all $x \in A'$, we have
\begin{align}
&x \in X'_{f}(\QQ),\ \# O_{f}(x) = \infty,\ O_{f}(x) \subset V,\ \text{and}\\
&\text{$(X',f,x)$ has DML property (see convention).}
\end{align}
 
Now set $g = f^{m}$. Let $d = \dim X$.
Since $\l_{1}(g) = \l_{1}(f)^{m} = \mu_{1}^{m} = \mu_{p}^{m}$, we have
\begin{align}
\lim_{n \to \infty} \big( (g^{n})^{*}L \cdot L^{d-1} \big)^{ \frac{1}{n}} = \mu_{p}^{m}.
\end{align}
(Here $(g^{n})^{*}L$ is the one defined as an element of $ \widetilde{\Pic}(X')_{\R}$. So is the intersection number.)
We choose $\eta \in (0,1)$ close to $1$ and $l \in \Z_{\geq 1}$ large enough so that
\begin{align}
&\big( (g^{l})^{*}L \cdot L^{d-1} \big) \geq (\eta \mu_{p}^{m})^{l}\\
& \eta \mu_{p}^{m} > \z^{-2m} \mu_{p+1}^{m} \\
& \frac{\eta \mu_{p}^{m}}{ (2 (L^{d}))^{ \frac{1}{l}} } > \z^{-2m} \mu_{p+1}^{m} .
\end{align}
We can choose such $\eta$ and $l$ because of \cref{condition-zeta}.

Let us pick a point $a \in A \cap A'$ such that
\begin{itemize}
\item $X'$ is smooth at $a$;
\item the projection from $a$, $p_{a} \colon \P^{N}_{\QQ} \dashrightarrow \P^{N-1}_{\QQ}$, is generically finite on $X' \setminus \{a\}$. 
\end{itemize}
(Recall we chose $L$ so that the embedding $X' \hookrightarrow \P^{N}_{\QQ}$ defined by $|L|$ is not an isomorphism.
In particular, $X'$ is not contained in a hyperplane of $\P^{N}_{\QQ}$ and thus projection from a general point of $X'$
is generically finite on $X'$. Such $a$ exists because a non-empty adelic open set is Zariski dense.)
By the choice of $A'$, we have $a \notin I_{g^{l}}$. 
Note also that $\codim I_{g^{l}} \geq 2$ since $X'$ is normal projective. 
Let $\G \subset |L|$ be the sub-linear system consisting of all hypersurfaces passing through $a$.
If $\dim X' \geq 2$, by \cref{lem:bertini}, there is an $H_{1} \in \G$ such that
$H_{1}$ is irreducible and reduced, smooth at $a$, $\dim H_{1} \cap I_{g^{l}} < \dim I_{g^{l}}$,
and $p_{a}$ is generically finite on $H_{1} \setminus \{a\}$.
If $\dim H_{1} \geq 2$, apply the same argument to the restriction of $\G$ to $H_{1}$ and get
$H_{2} \in \G$ such that $H_{1}\cap H_{2}$ is irreducible and reduced, smooth at $a$, $\dim H_{1}\cap H_{2} \cap I_{g^{l}} < \dim I_{g^{l}} \cap H_{1}$,
and $p_{a}$ is generically finite on $H_{1}\cap H_{2} \setminus \{a\}$.
Repeat this and we get $H_{1}, \dots , H_{d-1} \in |L|$ passing through $a$ such that
\begin{align}
&\text{$C := H_{1} \cap \cdots \cap H_{d-1}$ is an irreducible and reduced curve;}\\
&C \cap I_{g^{l}} =  \emptyset.
\end{align}
Moreover, the local equations of $H_{1},\dots, H_{d-1}$ form a regular sequence at each point of $C$.

Let us consider
\begin{equation}
\begin{tikzcd}
\G_{g^{l}} \arrow[d, "\pi"] \arrow[rd, " G "] & \\
X' \arrow[r, dashed, "g^{l}", swap]& X'
\end{tikzcd}
\end{equation}
where $\G_{g^{l}}$ is the graph of the rational map $g^{l}$.
Then we have
\begin{align}
\big( (g^{l})^{*}L \cdot L^{d-1} \big) = (G^{*}L \cdot \pi^{*}L^{d-1}) = (G^{*}L \cdot \pi^{-1}(C)) = \deg (g^{l}|_{C})^{*}L.
\end{align}
(Here for the second equality, we use the equality of schemes 
$\pi^{*}H_{1} \cap \cdots \cap \pi^{*}H_{d-1} = \pi^{-1}(C)$ to see that the cycle class
$c_{1}(\pi^{*}L)^{d-1} \cap [\G_{g^{l}}]$ is represented by the cycle $[\pi^{-1}(C)]$. )

Thus we get (use \cite[Theorem B.5.9]{HS00} on the normalization of $C$ to compare $h_{(g^{l}|_{C})^{*}L}$
and $h_{L|_{C} }$)
\begin{align}
h_{L} \circ g^{l}|_{C}& = h_{(g^{l}|_{C})^{*}L} + O(1) = \frac{ \deg (g^{l}|_{C})^{*}L }{ \deg L|_{C}} h_{L|_{C}} + O\big( \sqrt{(h_{L})|_{C}}\big)\\
& \geq
\frac{(\eta \mu_{p}^{m})^{l}}{(L^{d})} (h_{L})|_{C} -c' \sqrt{(h_{L})|_{C}}
\end{align}
on $C^{\circ}(\QQ)$ where $C^{\circ}$ is the normal locus of $C$ and $c'$ is a constant depends on $C, (g^{l}|_{C})^{*}L, L|_{C}, h_{L}$.
Here for the inequality, we use $\deg L|_{C} = (L^{d})$, which holds since $C$ is the complete intersection 
of members of linear system $|L|$.
 
 By the construction of $C$ (namely $a \in C$), $C^{\circ}(\QQ) \cap A \cap A'$ is 
 a non-empty adelic open set of $C(\QQ)$.
 By \cref{prop:htunbdd-on-adelicopen}, there is a point $x \in C^{\circ}(\QQ) \cap A \cap A'$ such that
 \begin{align}
 &\frac{c'}{ \sqrt{h_{L}(x)}} \leq \frac{(\eta \mu_{p}^{m})^{l}}{2(L^{d})} ;\\
 &h_{L}(g^{i}(x)) \geq M \quad i=0,\dots, l,
 \end{align}
 where $M \in \R_{\geq 1}$ is a large constant that we choose below.
 Then we have
 \begin{align}
 h_{L}(g^{l}(x)) \geq \frac{(\eta \mu_{p}^{m})^{l}}{2(L^{d})} h_{L}(x).
 \end{align}
 Thus there is an $i \in \{0, \dots, l-1  \}$ such that
 \begin{align}
 h_{L}(g^{i+1}(x)) \geq \frac{\eta \mu_{p}^{m}}{(2(L^{d}))^{1/l}} h_{L}(g^{i}(x)).
 \end{align}
 Since $x \in A'$, $g^{i}(x) = f^{mi}(x) \in V$.
 We have
 \begin{align}
&( h_{L} \circ f^{m} - \z^{-2m}\mu_{p+1}^{m}h_{L} - c_{1})(g^{i}(x))\\
&=h_{L}(g^{i+1}(x)) - \z^{-2m}\mu_{p+1}^{m}h_{L}(g^{i}(x)) - c_{1}\\
& \geq
\bigg(  \frac{\eta \mu_{p}^{m}}{(2(L^{d}))^{1/l}} - \z^{-2m}\mu_{p+1}^{m} \bigg) h_{L}(g^{i}(x)) - c_{1}\\
& \geq 
\bigg(  \frac{\eta \mu_{p}^{m}}{(2(L^{d}))^{1/l}} - \z^{-2m}\mu_{p+1}^{m} \bigg) M- c_{1}.
 \end{align}
If we chose $M$ so that this quantity is strictly positive, then we get
\begin{align}
( h_{L} \circ f^{m} - \z^{-2m}\mu_{p+1}^{m}h_{L} - c_{1})(g^{i}(x)) > 0.
\end{align}
Since $x \in A'$, we have $g^{n}(x) = f^{mn}(x) \in V(\QQ)$.
Thus by \cref{htrecineq}, we get
\begin{align}
&h_{L}(g^{i+n}(x)) - \z^{-2m} \mu_{p+1}^{m} h_{L}(g^{i+n-1}(x)) -c_{1}\\
&\geq 
(\z^{2m} \mu_{p}^{m})^{n-1} (h_{L}(g^{i+1}(x)) -\z^{-2m}\mu_{p+1}^{m}h_{L}(g^{i}(x)) - c_{1} )
\end{align}
for $n \geq 1$ and thus 
\begin{align}
\underline{\alpha}_{g}(g^{i}(x)) = \liminf_{n \to \infty} h_{L}(g^{i+n}(x))^{ \frac{1}{n}} \geq \z^{2m} \mu_{p}^{m}.
\end{align}
By \cite[Lemma 2.7]{MW22},
\begin{align} 
\underline{\alpha}_{f}(x) = \underline{\alpha}_{f}(f^{mi}(x)) = \underline{\alpha}_{g}(g^{i}(x))^{ \frac{1}{m}} \geq \z^{2}\mu_{p} = \z^{2} \l_{1} \geq \l_{1} - \e.
\end{align}
Since $(X', f, x)$ satisfies DML property, by \cite{matsuzawa-note-ad-dls}, the arithmetic degree $ \alpha_{f}(x)$ exists, i.e.
$ \alpha_{f}(x) = \underline{\alpha}_{f}(x)$. Thus we are done.
\end{proof}

\begin{remark}\label{rmk:advsd-dml}
In the setting of \cref{thm:advsdd}, the set 
\begin{align}
\label{rmkadvsdset}
\left\{ x \in X_{f}(\QQ) \ \middle|\ \parbox{15em}{$ \alpha_{f}(x)$ exists, $\alpha_{f}(x) \geq \l_{1}(f) - \e$, \\$(X, f, x)$ satisfies DML property } \right\}
\end{align}
is dense in $X(\QQ)$ with respect to the adelic topology.
This follows directly from the above proof,  or \cite[Proposition 3.27]{Xi22} and \cref{thm:advsdd}.
\end{remark}

\begin{lemma}\label{lem:bertini}
Let $X$ be a projective variety of dimension $\geq 2$ over an algebraically closed field of characteristic zero.
Let $L$ be a very ample line bundle, $a \in X$ a smooth closed point, and $W \subset X$ a closed subset such that $a \notin W$.
Suppose  $\G \subset |L|$ is a sub-linear system of the complete linear system $|L|$ 
consisting of hypersurfaces passing through $a$.
If
\begin{itemize}
\item
at least one member of $\G$ does not contain any of the irreducible component of $W$;
\item
the base locus of $\G$ is $\{a\}$;

\item
at least one member of $\G$ is smooth at $a$;
\item
the rational map $X \dashrightarrow \P^{n}$ defined by $\G$ is generically finite,
\end{itemize}
then a general member $H \in \G$ satisfies:
\begin{enumerate}
\item
$H$ is irreducible and reduced;
\item
any irreducible component of $W$ is not contained in $H$;
\item
$H$ is smooth at $a$.
\end{enumerate}
\end{lemma}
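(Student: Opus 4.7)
The plan is to show that each of the three properties (1), (2), (3) is satisfied on a non-empty Zariski-open subset of the projective space $\G$; since $\G$ is irreducible, the intersection of three non-empty opens is non-empty open, and any member of it satisfies (1)--(3) simultaneously.

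I would start with (3) because it is the most direct. Smoothness at the fixed smooth point $a\in X$ is an open condition on the parameter space $\G$: in local coordinates on $X$ near $a$, writing a member $H\in \G$ as the zero locus of a local equation $f_{H}$, the assignment $H \mapsto d f_{H}(a)$ is linear in $H$, so non-vanishing cuts out an open subset of $\G$. By the third bullet of the hypothesis, this open subset is non-empty. Next, for (2), write $W = W_{1}\cup \cdots \cup W_{r}$ as the decomposition into irreducible components. For each $i$, the condition ``$W_{i}\subset H$'' is the vanishing of finitely many linear forms on $\G$ (namely, the restriction map to $H^{0}(W_{i}, L|_{W_{i}})$ vanishing on the section defining $H$), hence it cuts out a linear subspace of $\G$, which is proper by the first bullet. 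Taking complements and intersecting over the finitely many $i$ gives a non-empty Zariski-open locus on which (2) holds.

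For (1), I would appeal to the classical Bertini theorem on irreducibility of a general member of a linear system in characteristic zero. The rational map $\phi_{\G}\colon X \dashrightarrow \P^{n}$ defined by $\G$ is generically finite by the fourth bullet, so its image has dimension $\dim X \geq 2$. The base locus of $\G$ is $\{a\}$, which has codimension at least $2$ in $X$ since $\dim X\geq 2$; therefore $\G$ has no fixed divisorial part. Bertini's theorem (in characteristic zero, for linear systems whose associated map has image of dimension $\geq 2$ and whose fixed part is trivial) then implies that a general member of $\G$ is irreducible and reduced.

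The only mild obstacle is the verification in (1) that the base point $a$ contributes no fixed divisorial component, which is automatic because $\{a\}$ has codimension at least $2$. Once that is noted, each of the three loci in $\G$ is shown to be non-empty and open, and the intersection provides a general member $H\in\G$ satisfying all three conclusions.
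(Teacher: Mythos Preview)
Your proposal is correct and follows essentially the same route as the paper: conditions (2) and (3) are open and non-empty by hypothesis, and (1) is handled by Bertini-type theorems using that the base locus $\{a\}$ has codimension $\geq 2$ and the map defined by $\G$ is generically finite. The only minor difference is that the paper treats reducedness and irreducibility separately---it uses that $\G$ is base-point-free on $X\setminus\{a\}$ to get reducedness of $H\setminus\{a\}$, then combines with smoothness at $a$ to conclude $H$ is reduced everywhere---whereas you bundle both into a single invocation of ``classical Bertini''; since $X$ is only assumed to be a variety (possibly singular), the paper's decoupling is slightly more careful, but your outline captures the same content.
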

\begin{proof}
First note that since containing an irreducible component of $W$ is a closed condition,
a general member of $\G$ does not contain any irreducible component of $W$.

Next, note that the restriction of $\G$ to $X \setminus \{a\}$ has no base point.
By \cite[Corollary 3.4.9]{joinintersec}, for a general member $H \in \G$, we have $H \setminus \{a\}$ is reduced.
Moreover, since being singular at $a$ is a closed condition, for a general $H \in \G$, $H$ is smooth at $a$.
In particular, general $H \in \G$ is reduced.

Finally, since $\dim X \geq 2$, $\G$ satisfies the assumption of 
\cite[Theorem 3.4.10]{joinintersec}, and thus a general member of $\G$ is irreducible.
\end{proof}

\begin{question}
Is it possible to remove $\e$ from the statement of \cref{thm:advsdd}?
That is, are there always points $x \in X_{f}(\QQ)$ such that $ \alpha_{f}(x) = \l_{1}(f)$?
\end{question}

If there is a family of rational maps 
\begin{equation}
\begin{tikzcd}
X \arrow[rr, dashed, "f"] \arrow[rd]&[-3ex] &[-3ex] X \arrow[ld] \\
&Y&
\end{tikzcd}
\end{equation}
such that $\l_{1}(f)$ is strictly larger than any of $\l_{1}(f|_{X_{y}})$, where $X_{y}$ is the fiber over $y \in Y(\overline{\Q})$,
then the answer to the above question is no.
But we do not know if such rational map exists or not for now.
Note that over an uncountable field instead of $\QQ$, there is no such map since $\l_{1}(f|_{X_{y}}) = \l_{1}(f)$
for very general point $y$ by the product formula \cite{Da20,Tr20}
(under suitable assumptions on the family such as \cite[Definition 1.7]{xie2024algebraic}).
On the other hand, 
there is an example of a map over $\Spec \Z$ such that the dynamical degree
on the generic fiber is strictly larger than that of every special fiber \cite[Example 1.10]{xie2024algebraic}.
Although in this case, dynamical degree of the map on the total space is not defined as it is not a variety over a field.

\section{Zariski dense orbit conjecture for birational maps under certain conditions}

In this section, we prove \cref{mainthm:zdoc-l3<l1}.
We prove the following stronger statement.

\begin{theorem}\label{thm:zdoc-l3<l1}
Let $X$ be a projective variety over $\QQ$.
Let $f \colon X \dashrightarrow X$ be a birational map.
If $\l_{3}(f) < \l_{1}(f)$ (we consider this condition is vacuous when $\dim X \leq 2$), 
then Zariski dense orbit conjecture holds for $f$.
More strongly,  if $f$ does not admit invariant non-constant rational functions, then
the set
\begin{align}
\left\{ x \in X_{f}(\QQ) \ \middle|\ \text{$O_{f}(x)$ is Zariski dense in $X$} \right\}
\end{align}
is dense in $X(\QQ)$ with respect to the adelic topology.
\end{theorem}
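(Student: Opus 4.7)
The plan is to combine the arithmetic--dynamical degree theorem \cref{thm:advsdd} with the standard inequality $\l_1(f|_Y)\leq\l_{1+\codim Y}(f)$ for invariant subvarieties. By hypothesis $\l_3(f)<\l_1(f)$, so I pick $\varepsilon>0$ with $\l_1(f)-\varepsilon>\l_3(f)$. Applying \cref{thm:advsdd} together with \cref{rmk:advsd-dml}, the set
$$S:=\{x\in X_f(\QQ)\mid \alpha_f(x)\text{ exists},\ \alpha_f(x)\geq \l_1(f)-\varepsilon,\ (X,f,x)\text{ satisfies DML}\}$$
is dense in $X(\QQ)$ with respect to the adelic topology, and every $x\in S$ satisfies $\alpha_f(x)>\l_3(f)$.

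Next I invoke the standard fact that a birational self-map of a projective variety without non-constant invariant rational functions admits only finitely many totally invariant hypersurfaces $D_1,\dots,D_k$, which is the same input already used in \cite{MW22}. Since the adelic topology is finer than the Zariski topology (\cite[Proposition 3.18]{Xi22}) and $X$ is adelically irreducible, the subset $S':=S\setminus\bigcup_{i=1}^k D_i(\QQ)$ is still adelically dense in $X(\QQ)$.

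Now fix $x\in S'$ and set $Y:=\overline{O_f(x)}\subset X$. The DML property allows one to make sense of $f|_Y$ as a dominant rational self-map of the irreducible variety $Y$, because the orbit returns along arithmetic progressions and thus meets $Y\setminus I_f$ infinitely often in a controlled way; this is the kind of restriction argument carried out in \cite{MW22}. If $Y=X$ then $x$ already has Zariski dense orbit. Otherwise $Y$ is a proper $f$-invariant subvariety; since $x\notin D_i$ for any $i$, the orbit avoids each $D_i$, hence $Y$ is not contained in any $D_i$, so $Y$ itself is not a totally invariant hypersurface, and therefore $\codim_X Y = r\geq 2$. The arithmetic degree $\alpha_f(x)$ depends only on heights evaluated along the orbit, which lies in $Y$, so $\alpha_f(x)=\alpha_{f|_Y}(x)$; applying the upper bound \cite[Theorem 1.4]{Ma20a} on $Y$ together with the inequality $\l_1(f|_Y)\leq\l_{1+r}(f)$ for dominant restrictions to invariant subvarieties of codimension $r$ (cf.\ \cite{DS05,Tr20,Da20}) and the monotonicity $\l_{1+r}(f)\leq\l_3(f)$ for $r\geq 2$, one obtains
$$\alpha_f(x)\leq\l_1(f|_Y)\leq\l_{1+r}(f)\leq\l_3(f),$$
which contradicts $x\in S$. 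Hence $Y=X$ for every $x\in S'$, and $S'$ provides the required adelically dense set of points with Zariski dense orbit.

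The main obstacle is the middle step: for $x\in S'$ one must genuinely produce a dominant rational self-map $f|_Y\colon Y\dashrightarrow Y$ of the orbit closure to which both the arithmetic degree comparison $\alpha_f(x)=\alpha_{f|_Y}(x)\leq\l_1(f|_Y)$ and the dynamical degree inequality $\l_1(f|_Y)\leq\l_{1+r}(f)$ apply. The DML property on $(X,f,x)$ is precisely the hypothesis that makes this restriction well-behaved; the argument is worked out in \cite{MW22} under the stronger cohomological hyperbolicity assumption, and the adaptation to the weaker assumption $\l_3(f)<\l_1(f)$ is essentially bookkeeping, since the only use of the assumption is to guarantee the chain of inequalities above once a codimension-$r$ orbit closure is present.
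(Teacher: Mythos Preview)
Your overall strategy coincides with the paper's, but two technical points are handled differently there and your writeup has gaps in both places.

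First, the orbit closure $Y=\overline{O_f(x)}$ need not be irreducible, so ``$f|_Y$ as a dominant rational self-map of the irreducible variety $Y$'' is not well-posed as stated. The paper avoids this by first restricting to smooth open sets $U,V\subset X$ with $f\colon U\xrightarrow{\sim}V$, setting $g:=f|_{U\cap V}$, and choosing $x\in (U\cap V)_g(\QQ)$ (which contains a non-empty adelic open by \cite[Proposition 3.24, 3.27]{Xi22}). Then $g$ is an open immersion on its domain of definition, so it acts transitively on the generic points of $Z:=\overline{O_g(x)}\subset U\cap V$; hence $Z$ is pure dimensional and totally invariant, one picks an irreducible component $W\ni x$ and an iterate $g^m$ with $g^m(W\setminus I_{g^m})\subset W$, and runs the argument on $W$.

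Second, the inequality $\l_1(f|_Y)\leq\l_{1+r}(f)$ is not the product formula of \cite{DS05,Tr20,Da20} (those concern fibrations) but \cite[Lemma 2.3]{MW22}, whose hypothesis is that the map be \emph{isomorphic at the generic point} of the invariant subvariety. That hypothesis is not delivered by DML; it holds here precisely because $g$ is an open immersion on $U\cap V\setminus I_g$. So your diagnosis that ``DML is precisely the hypothesis that makes this restriction well-behaved'' is off: DML is used only upstream, inside \cref{thm:advsdd}, to ensure $\alpha_f(x)$ converges. The restriction step rests on birationality, via the passage to $U\cap V$. Once these two points are fixed, your chain $\alpha_{g}(x)^m=\alpha_{g^m|_W}(x)\leq\l_1(g^m|_W)\leq\l_{1+\codim W}(g^m)\leq\l_3(g^m)$ (the last by log concavity together with $\l_1>\l_3$) matches the paper exactly.
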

\begin{proof}
Let us take non-empty Zariski open subsets $U,V \subset X$ such that
$U, V$ are smooth and $f$ induces an isomorphism $U \xrightarrow{\sim} V$:
\begin{equation}
\begin{tikzcd}
X \arrow[r, "f", dashed] & X \\
U \arrow[u, phantom , "\subset", sloped] \arrow[r,"\sim"] & V \arrow[u, phantom, "\subset", sloped]
\end{tikzcd}
\end{equation}
Let us consider the induced dominant rational self-map $g \colon U\cap V \dashrightarrow U \cap V$.

Suppose $f$ does not admit invariant non-constant rational function.
Then $g$ does not admit invariant non-constant rational function, either.
By \cite[Corollary 1.3]{invhs-bmt} or \cite[Theorem B]{Ca10}, there are only finitely many totally invariant hypersurfaces of $g$.
Here a hypersurface means closed subset of pure codimension one, and
a closed subset $Y \subset U\cap V$ is said to be totally invariant under $g$ if
\begin{align}
Y = \bigcup\left\{ Y' \subset \overline{g|_{U\cap V \setminus I_{g}}^{-1}(Y)} \ \  \parbox{5em}{irreducible component}\ \middle|\ \parbox{10em}{$Y'$ dominates an irreducible component of $Y$ via $g$}  \right\}
\end{align}
as sets.
Let $H \subset U\cap V$ be the union of all of such invariant hypersurfaces.

Since $g$ does not admit invariant non-constant rational function,
it is of infinite order.
Thus for any given non-empty adelic open subset $A \subset X(\QQ)$,
there is a point $x \in (U\cap V)_{g}(\QQ) \cap A \setminus H$ with infinite $g$-orbit (see \cite[Proposition 3.24]{Xi22}). 
When $\dim X \geq 3$, by \cref{thm:advsdd}, we can take such $x$ so that
$ \alpha_{g}(x) > \l_{3}(g) = \l_{3}(f)$. 

We prove $O_{g}(x)$ is Zariski dense.
Let $Z = \overline{O_{g}(x)}$ be the Zariski closure in $U \cap V$ and suppose $Z \neq U \cap V$.
Since $O_{g}(x) \cap I_{g} =  \emptyset$,  $Z \setminus I_{g}$ is dense in $Z$ and 
$g(Z \setminus I_{g}) \subset Z$.
As $g|_{U\cap V \setminus I_{g}} \colon U \cap V \setminus I_{g} \longrightarrow U\cap V$
is an open immersion, $g$ acts on the set of generic points of $Z$ transitively.
Thus $Z$ is pure dimensional and  totally invariant under $g$.
Since $x \notin H$, we have $Z \not\subset H$ and thus $\dim Z \leq d - 2$, where $d = \dim X$.
When $d \leq 2$, we are done since we took $x$ so that $O_{f}(x)$ is infinite.

Suppose $d \geq 3$. Let us fix an irreducible component $W \subset Z$ of $Z$ containing $x$ and take an $m \geq 1$ such that
$g^{m}(W \setminus I_{g^{m}}) \subset W$.
Note that since $ \alpha_{g}(x) > \l_{3}(g) \geq 1$, $\dim W > 0$.
Also $g^{m}$ is isomorphic at the generic point of $W$.
Thus by \cite[Lemma 2.3]{MW22}, we have
\begin{align}
\l_{1}(g^{m}|_{W}) \leq \l_{1 + \codim W}(g^{m}).
\end{align}
Since $\codim W \geq 2$ and $\l_{1}(g^{m}) = \l_{1}(g)^{m} > \l_{3}(g)^{m} = \l_{3}(g^{m})$, by the log concavity of dynamical degrees, we have
$ \l_{1 + \codim W}(g^{m}) \leq  \l_{3}(g^{m})$.
Thus we get
\begin{align}
\l_{1}(g^{m}|_{W})   \leq \l_{3}(g^{m}) (< \l_{1}(g^{m})).
\end{align}
Then we get
\begin{align}
\alpha_{g}(x)^{m} = \alpha_{g^{m}}(x)= \alpha_{g^{m}|_{W}}(x) \leq \l_{1}(g^{m}|_{W}) \leq \l_{3}(g^{m}) = \l_{3}(g)^{m},
\end{align}
where the first inequality follows from \cite[Proposition 3.11]{JSXZ21}.
This inequality contradicts the choice of $x$.
\end{proof}

\begin{remark}\label{rmk:setofiniptisdense}
Under the assumption of \cref{thm:zdoc-l3<l1}, when $f$ does not admit invariant non-constant rational function,
the proof actually shows the following: for any $\e>0$,
the set 
\begin{align}
\left\{ x \in X_{f}(\QQ) \ \middle|\ \text{$O_{f}(x)$ is Zariski dense in $X$ and $ \alpha_{f}(x) \geq \l_{1}(f) - \e$} \right\}
\end{align}
is dense in $X(\QQ)$ with respect to the adelic topology.
In particular, for any $\e>0$, there is an $x \in X_{f}(\QQ)$ such that $(X,f,x)$ satisfies DML property, $O_{f}(x)$ is Zariski dense in $X$, 
and $ \alpha_{f}(x) > \l_{1}(f) - \e$ (see  \cite[Proposition 3.27]{Xi22}).
In this case, the orbit $O_{f}(x)$ is generic.
By \cite[Theorem 2.2]{matsuzawa-note-ad-dls}, $ \alpha_{f}(x)$ can take only the values from $\{\l_{1}(f) = \mu_{1}(f), \mu_{2}(f), 1\}$.
Thus if we take $\e$ small enough, our point $x$ satisfies $ \alpha_{f}(x) = \l_{1}(f)$.
\end{remark}

\begin{remark}
Long Wang pointed out us that \cref{rmk:setofiniptisdense} and \cite{BDJK21} give us 
an example of a birational map with a $\QQ$-point whose arithmetic degree is a transcendental number.
Indeed, by \cite{BDJK21}, there is a birational map $f \colon \P^{3}_{\QQ} \dashrightarrow \P^{3}_{\QQ}$
whose first dynamical degree is a transcendental number.
This map $f$ does not admit non-constant rational function. Indeed, if it is the case,
the first dynamical degree of $f$ is equal to the first relative dynamical degree with respect to
a non-constant rational map to a curve, which is equal to the first dynamical degree of a very general fiber 
 (take base change to $\C$ to find such a fiber).
Since the relative dimension is two, the first dynamical degree on the fiber is algebraic, as a birational map on a surface
is always birationally conjugate to an algebraically stable map \cite[Theorem 0.1]{DillerFavre}.
This is a contradiction.
Therefore, by \cref{rmk:setofiniptisdense}, there is a point $x \in (\P^{3}_{\QQ})_{f}(\QQ)$ such that
$ \alpha_{f}(x)= \l_{1}(f)$, which is a transcendental number.

We note that the existence of transcendental arithmetic degree is first proven 
in \cite{MW22}. The map in the example was not birational, and finding an example of a birational map
was left as a problem.
Such an example was recently constructed by Sugimoto in \cite{sugimoto20241}.
The above argument gives another construction of such example.
\end{remark}

\begin{corollary}
Let $X$ be a projective variety over $\QQ$ of dimension four.
Let $f \colon X \dashrightarrow X$ be a birational map with $\l_{1}(f) \neq \l_{3}(f)$.
Then Zariski dense orbit conjecture holds for $f$.
More strongly, if $f$ does not admit invariant non-constant rational function, then
the set $\{x \in X_{f}(\QQ) \mid \text{$O_{f}(x)$ is Zariski dense in $X$}\}$ is dense in $X(\QQ)$
with respect to the adelic topology.
\end{corollary}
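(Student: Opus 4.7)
The plan is a case analysis on the sign of $\l_{1}(f) - \l_{3}(f)$ and, in the unfavourable case, a passage to $f^{-1}$. When $\l_{1}(f) > \l_{3}(f)$, \cref{thm:zdoc-l3<l1} applies directly to $f$ and yields both the ZDO conclusion and the adelic density statement.

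When $\l_{1}(f) < \l_{3}(f)$, the plan is to exploit the birational duality of dynamical degrees: for any birational self-map $h$ of a projective variety of dimension $n$, one has $\l_{i}(h) = \l_{n-i}(h^{-1})$, obtained from $\deg_{i,L}(h^{N}) = (p_{2}^{*}L^{i} \cdot p_{1}^{*}L^{n-i})$ by swapping the two projections on the graph. With $n = 4$ this yields $\l_{1}(f^{-1}) = \l_{3}(f) > \l_{1}(f) = \l_{3}(f^{-1})$. Since the non-existence of an invariant non-constant rational function is symmetric under $h \mapsto h^{-1}$, \cref{thm:zdoc-l3<l1} applied to $f^{-1}$ furnishes an adelic-dense subset $B \subset X(\QQ)$ consisting of points $y$ with $\overline{O_{f^{-1}}(y)} = X$.

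It remains to upgrade adelic density for dense $f^{-1}$-orbits to adelic density for dense forward $f$-orbits. My plan is to revisit the internal machinery of the proof of \cref{thm:zdoc-l3<l1}: given any non-empty adelic open $A \subset X(\QQ)$, use \cite[Propositions 3.24, 3.27]{Xi22} to pass to an adelic open $A^{*} \subset A$ where all iterates of $f$ and $f^{-1}$ are defined and behave adelic-continuously, and apply \cref{thm:advsdd} to $f^{-1}$ to secure a point $x \in A^{*}$ with $\alpha_{f^{-1}}(x) > \l_{3}(f^{-1}) = \l_{1}(f)$. The codimension argument in the proof of \cref{thm:zdoc-l3<l1}, rerun for $f^{-1}$, then produces $\overline{O_{f^{-1}}(x)} = X$ with $x \in A$.

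The main obstacle is to deduce $\overline{O_{f}(x)} = X$ for the very same $x$, since the arithmetic-degree bound controls only backward height growth while we want the forward orbit to be dense. I expect to handle this by exploiting the two-sided orbit structure together with the finiteness of totally invariant hypersurfaces: the forward orbit closure $Z := \overline{O_{f}(x)}$ is forward $f$-invariant with $x$ chosen outside the finite union $H$ of totally invariant hypersurfaces of $f$, so if $Z \subsetneq X$ then $Z$ has codimension at least two, and a codimension-$\geq 2$ irreducible component $W$ containing $x$ should be ruled out by combining the log-concavity bound $\l_{1}(f^{m}|_{W}) \leq \l_{1+\codim W}(f^{m}) \leq \l_{3}(f^{m})$ used in \cref{thm:zdoc-l3<l1} with the density of the backward iterates of $x$ produced above.
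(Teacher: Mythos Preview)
Your case split and the passage to $f^{-1}$ via $\l_i(f^{-1})=\l_{4-i}(f)$ are exactly right and match the paper. The gap is in the last paragraph, where you try to rule out a proper forward orbit closure. The bound you invoke,
\[
\l_1(f^m|_W)\ \leq\ \l_{1+\codim W}(f^m)\ \leq\ \l_3(f^m),
\]
only controls the \emph{forward} arithmetic degree: it yields $\alpha_f(x)\leq\l_3(f)$. But you are in the case $\l_3(f)>\l_1(f)$, and the only lower bound you have arranged is $\alpha_{f^{-1}}(x)>\l_3(f^{-1})=\l_1(f)$; there is no lower bound on $\alpha_f(x)$ to contradict. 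Nor can you transport the argument to $f^{-1}$: the component $W$ is built from the \emph{forward} orbit closure, and you have no reason for the backward iterates $f^{-mn}(x)$ to remain in $W$ (indeed, if they did, that would already contradict backward density without any arithmetic input). So the arithmetic-degree mechanism from \cref{thm:zdoc-l3<l1} does not close the loop here, and ``combining with the density of the backward iterates'' is left as an unexplained step.

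The paper abandons arithmetic degrees entirely in this case and gives a direct argument. Working on $U\cap V$ where $g=f|_{U\cap V}$ is an open immersion on its domain, one arranges via \cite[Proposition 3.27]{Xi22} that both $O_g(x)$ and $O_{g^{-1}}(x)$ are well-defined and that $\overline{O_{g^{-1}}(x)}=U\cap V$. Then $O_g(x)\subset (U\cap V)_{g^{-1}}(\QQ)$ automatically. The key point is that the top-dimensional components of $\overline{O_g(x)}$ are permuted by $g^{-1}$: if $Z$ is one, then $(Z\cap O_g(x))\setminus\{x\}$ is dense in $Z$ (as $\dim Z>0$), hence $g^{-1}(Z\setminus I_{g^{-1}})\subset\overline{O_g(x)}$, and its closure is again a top component. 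Choosing any $y\in Z\cap O_g(x)$, iterating gives $O_{g^{-1}}(y)\subset\overline{O_g(x)}$; since $O_{g^{-1}}(x)\subset O_{g^{-1}}(y)$ is Zariski dense, $\overline{O_g(x)}=U\cap V$. This is the missing bridge between backward density and forward density.
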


\begin{proof}
If $\l_{1}(f) > \l_{3}(f)$, then this is exactly the same with \cref{thm:zdoc-l3<l1}.
Suppose $\l_{1}(f) < \l_{3}(f)$. 
Since $\l_{i}(f^{-1})=\l_{4 - i}(f)$ for $i=0,\dots, 4$, we have $\l_{1}(f^{-1}) = \l_{3}(f) > \l_{1}(f) = \l_{3}(f^{-1})$.
Moreover, if $f$ does not admit invariant non-constant rational function, then neither does $f^{-1}$.
Thus by \cref{thm:zdoc-l3<l1}, the set
\begin{align}
\{x \in X_{f^{-1}}(\QQ) \mid \text{$O_{f^{-1}}(x)$ is Zariski dense in $X$}\}
\end{align}
is dense in $X(\QQ)$ with respect to the adelic topology.
As before, let us take non-empty Zariski open subsets $U,V \subset X$ such that
$f$ induces an isomorphism $U \xrightarrow{\sim} V$:
\begin{equation}
\begin{tikzcd}
X \arrow[r, "f", dashed] & X \\
U \arrow[u, phantom , "\subset", sloped] \arrow[r,"\sim"] & V \arrow[u, phantom, "\subset", sloped]
\end{tikzcd}
\end{equation}
Let us consider the induced dominant rational self-map $g \colon U\cap V \dashrightarrow U \cap V$.
Then by \cite[Proposition 3.27]{Xi22}, the set 
\begin{align}
(U\cap V)_{g}(\QQ) \cap (U\cap V)_{g^{-1}}(\QQ)
\end{align}
contains non-empty adelic open subset of $(U\cap V)(\QQ)$, hence of $X(\QQ)$.
Therefore the set
\begin{align}
\left\{ x \in (U\cap V)_{g}(\QQ) \cap (U\cap V)_{g^{-1}}(\QQ) \ \middle|\  \text{$O_{g^{-1}}(x)$ is Zariski dense in $U \cap V$} \right\}
\end{align}
is dense in $X(\QQ)$ with respect to the adelic topology.
Now we claim that any point $x$ in this set has the property $x \in X_{f}(\QQ)$ and $O_{f}(x)$ is Zariski dense in $X$.
The first property is obvious as $x \in (U\cap V)_{g}(\QQ) \subset X_{f}(\QQ)$.
By the choice of $U,V$, we see that $O_{g}(x) \subset (U \cap V)_{g^{-1}}(\QQ)$.
We note that $O_{g}(x)$ is infinite since otherwise $x$ is $g$-periodic and contradicts Zariski density of $O_{g^{-1}}(x)$. 
Let $Z \subset \overline{O_{g}(x)}$ be a top dimensional irreducible component. Here the closure is taken in $U \cap V$.
Then $(Z \cap O_{g}(x)) \setminus \{x\}$ is Zariski dense in $Z$ since $\dim Z > 0$.
Thus $g^{-1}(Z \setminus I_{g^{-1}}) \subset \overline{O_{g}(x)}$.
Thus $ \overline{g^{-1}(Z \setminus I_{g^{-1}})}$ is also a top dimensional irreducible component of $\overline{O_{g}(x)}$,
and hence we can apply the same argument.

Now take a point $y \in Z \cap O_{g}(x)$.
Then $y \in (U \cap V)_{g^{-1}}(\QQ)$ and by the above argument, we have $O_{g^{-1}}(y) \subset \overline{O_{g}(x)}$.
Since $O_{g^{-1}}(x) \subset O_{g^{-1}}(y)$ and $O_{g^{-1}}(x)$ is Zariski dense in $U \cap V$, 
we are done.
\end{proof}

\bibliographystyle{acm}
\bibliography{ad_zdoc}

\end{document}